\documentclass{amsart}
\usepackage[bookmarksnumbered, colorlinks, plainpages]{hyperref}
\usepackage{amsmath,amsfonts,amssymb}
\usepackage{url}
\usepackage{mathrsfs}
\usepackage{graphicx}
\usepackage{caption}
\usepackage{subfig}
\usepackage{alltt}
\usepackage{epigraph}
\usepackage{cite}
\usepackage[shortlabels]{enumitem}
\usepackage{mathtools}

\setlength\epigraphwidth{7cm}

\theoremstyle{plain}
\newtheorem{theorem}{Theorem}[section]
\newtheorem{lemma}[theorem]{Lemma}
\newtheorem{proposition}[theorem]{Proposition}
\newtheorem{corollary}[theorem]{Corollary}

\theoremstyle{definition}

\newtheorem{example}[theorem]{Example}

\theoremstyle{remark}
\newtheorem{remark}[theorem]{Remark}



\newcommand{\norm}[1]{\|#1\|}
\newcommand{\scal}[1]{\langle#1\rangle}
\DeclareMathOperator{\ran}{ran}
\DeclareMathOperator{\rank}{rank}
\DeclareMathOperator{\spc}{Sp}
\newcommand{\R}{\mathbb{R}}
\newcommand{\T}{\mathbb{T}}
\begin{document}

\title[Operator equalities and Characterizations of  Orthogonality]{Operator equalities and Characterizations of Orthogonality in Pre-Hilbert $C^*$-Modules}

\author[R. Eskandari, M.S. Moslehian, D. Popovici]{Rasoul Eskandari$^1$, M. S. Moslehian$^2$, \MakeLowercase{and} Dan Popovici$^3$}
\address{$^1$Department of Mathematics, Faculty of Science, Farhangian University, Tehran, Iran.}
\email{eskandarirasoul@yahoo.com}

\address{$^2$Department of Pure Mathematics, Center of Excellence in
Analysis on Algebraic Structures (CEAAS), Ferdowsi University of Mashhad, P. O. Box 1159, Mashhad 91775, Iran.}
\email{moslehian@um.ac.ir}

\address{$^3$Department of Mathematics and Computer Science, West University of Timisoara, RO-300223 Timisoara, Vasile Parvan Blvd no. 4, Romania.}
\email{dan.popovici@e-uvt.ro}

\subjclass[2010]{46L08, 46L05, 47A62.}

\keywords{Hilbert $C^*$-module; norm equality; Pythagoras orthogonality; Pythagoras identity; parallelogram law.}

\begin{abstract}
In the first part of the paper, we use states on $C^*$-algebras in order to establish some equivalent statements to equality in the triangle inequality, as well as to the parallelogram identity for elements of a pre-Hilbert $C^*$-module. We also characterize the equality case in the triangle inequality for adjointable operators on a Hilbert $C^*$-module. Then we give certain necessary and sufficient conditions to the Pythagoras identity for two vectors in a pre-Hilbert $C^*$-module under the assumption that their inner product has negative real part. We introduce the concept of Pythagoras orthogonality and discuss its properties. We describe this notion for Hilbert space operators in terms of the parallelogram law and some limit conditions. We present several examples in order to illustrate the relationship between the Birkhoff--James, Roberts, and Pythagoras orthogonalities, and the usual orthogonality in the framework of Hilbert $C^*$-modules.
\end{abstract}

\maketitle

\section{Introduction}

Let $S(\mathcal{A})$ be the set of all states of a given $C^*$-algebra $\mathcal{A}$. The \textit{numerical range} of an element $a\in\mathcal{A}$ is defined by
\begin{equation*}
V(a)=\{\varphi(a):\varphi\in S(\mathcal{A})\}.
\end{equation*}
If $a$ is a normal element of $\mathcal{A}$ then there exists a state $\varphi$ on $\mathcal{A}$ such that $|\varphi(a)|=\|a\|$ (cf. \cite[Theorem 3.3.6]{Mur}). The set
\begin{equation*}
S_a(\mathcal{A})=\{\varphi\in S(\mathcal{A}):|\varphi(a)|=\|a\|\}
\end{equation*}
is nonempty and closed. This set is also convex if $a$ is positive.

A (right) \textit{pre-Hilbert $C^*$- module} $\mathscr{E}$ over a $C^*$-algebra $\mathcal{A}$ is a (complex) linear space which is also a right $\mathcal{A}$-module, having a compatible structure (i.e., $\lambda(xa)=(\lambda x)a=x(\lambda a),\ \lambda\in\mathbb{C},a\in\mathcal{A},x\in\mathscr{E}$), equipped with an $\mathcal{A}$-valued inner product on $\mathscr{E}$, i.e., a sesquilinear map $\langle \cdot,\cdot\rangle: \mathscr{E}\times \mathscr{E}\to \mathcal{A}$ with the properties:
\begin{enumerate}[$(a)$]
\item $\scal{x,x}\ge 0,\ x\in\mathscr{E}$; $\scal{x,x}=0$ if and only if $x=0$.
\item $\scal{x,y}^*=\scal{y,x},\ x,y\in\mathscr{E}$.
\item $\scal{x,ya}=\scal{x,y}a,\ x,y\in\mathscr{E},\ a\in\mathcal{A}$.
\end{enumerate}
The formula
\begin{equation*}
\mathscr{E}\ni x\mapsto\|x\|:=\||x|\|_\mathcal{A}\in\R_+
\end{equation*}
defines a norm on $\mathscr{E}$ (for $x\in\mathscr{E}$, the notation $|x|:=\scal{x,x}^{1/2}$ will be used in the subsequent part of the paper).
A pre-Hilbert $\mathcal{A}$-module which is complete with respect to this norm is called a \textit{Hilbert $C^*$-module over $\mathcal{A}$}, or a \textit{Hilbert $\mathcal{A}$-module}. Every $C^*$-algebra $\mathcal{A}$ can be regarded as a Hilbert module over itself, the inner product being defined as $\scal{a,b}:=a^*b,\ a,b\in\mathcal{A}$.

Suppose that $\mathscr{E}$ and $\mathscr{F}$ are Hilbert $C^*$-modules. Let $\mathcal{L}(\mathscr{E}, \mathscr{F})$ be the set of all maps $T :\mathscr{E}\to \mathscr{F}$ for which there is an application $T^*:\mathscr{F}\to\mathscr{E}$ such that
\begin{equation}\label{eqdefadjoint}
\langle Tx, y\rangle =\langle x, T^*y\rangle,\quad x\in\mathscr{E}, y\in\mathscr{F}.
\end{equation}
An operator $T\in\mathcal{L}(\mathscr{E},\mathscr{F})$, called \textit{adjointable}, is $\mathcal{A}$-linear and bounded, while $T^*$ (the \textit{adjoint} of $T$) is uniquely determined by \eqref{eqdefadjoint}. The map $T\mapsto T^*$ has the properties of an isometric involution. Moreover, $\mathcal{L}(\mathscr{E}):=\mathcal{L}(\mathscr{E},\mathscr{E})$ is a $C^*$-algebra.

Thus, Hilbert $C^*$-modules are generalization of Hilbert spaces by allowing inner products to take values in a $C^{*}$-algebra rather than in the field of complex numbers. Unfortunately, certain basic properties of Hilbert spaces are not valid in general Hilbert $C^*$-modules. For example, it is not true that any bounded linear operator on a Hilbert $C^*$-module is adjointable or any closed submodule is orthogonally complemented. Therefore, not only any investigation in the context of Hilbert $C^*$-modules is non-trivial, but also it is an interesting question to ask under which conditions the results analogous to those for Hilbert spaces can still remain true for Hilbert $C^*$-modules.

It is known that the equality $\|x + y\|=\|x\| +\|y\|$ holds in a Hilbert space $\mathscr{H}$ if and only if $x$ and $y$ are linearly dependent by positive scalars. Being a starting point in our discussion on Pythagoras identities, one of our goals is to investigate the validity of this equality in the setting of Hilbert $C^*$- modules. Maybe the first result in this direction is a characterization of Aramba\v{s}i\'c and Raji\'c \cite{AR2a} which shows that, for two elements $x$ and $y$ in a pre-Hilbert $\mathcal{A}$-module $\mathscr{E}$, $\|x + y\|=\|x\| +\|y\|$ if and only if $\|x\|\|y\|\in V(\scal{x,y})$. The particular situations of Hilbert space operators or of elements in a $C^*$-algebra have been emphasized earlier by Barraa and Boumazgour \cite{BB2}, respectively by Nakamoto and Takahasi \cite{NT}. We show, among others, that the following statements are equivalent: $\||x|^2+|y|^2\|=\|x\|^2+\|y\|^2$; $\||x||y|\|=\|x\|\|y\|$; $S_{|x|^2}(\mathcal{A})\cap S_{|y|^2}(\mathcal{A})\ne\emptyset$; $\|x\|^2\|y\|^2\in V(|x|^2|y|^2)$; $\|x\|^2+\|y\|^2\in V(|x|^2+|y|^2)$. We also discuss the ``triangle equality'' for two adjointable operators $s$ and $t$ on $\mathscr{E}$. By contrast with the earlier approach, our result relies on the states of $\mathcal{A}$ and not on the states of the $C^*$-algebra $L(\mathscr{E})$. More precisely, we prove that $\|s+t\|=\|s\|+\|t\|$ if and only if there exist sequences $(\varphi_n)_{n\ge 0}$ (of states on $\mathcal{A}$) and $(x_n)_{n\ge 0}$ (of elements in $\mathscr{E}$) such that $\varphi_n(|x_n|^2)=1,\ n\ge 0$ and $\varphi_n(\scal{sx_n,tx_n})\xrightarrow{n\to\infty}\|s\|\|t\|$.

A norm $\|\cdot\|$ on a vector space $\mathscr{X}$ is induced by a scalar product if and only if the parallelogram identity $\|x+y\|^2+\|x-y\|^2=2(\|x\|^2+\|y\|^2)$ holds for every $x,y\in\mathscr{X}$. This parallelogram identity is not valid in the general framework of Hilbert $C^*$-modules. In our attempt to characterize this notion using the language of states we show that any two of the following statements imply the third one: $x$ and $y$ verify the parallelogram identity; $S_{|x|^2}(\mathcal{A})\cap S_{|y|^2}(\mathcal{A})\ne\emptyset$; $S_{|x+y|^2}(\mathcal{A})\cap S_{|x-y|^2}(\mathcal{A})\ne\emptyset$.

Our next aim was to characterize an equality of the form $\|x+y\|^2=\|x\|^2+\|y\|^2$ (\textit{Pythagoras identity}). This identity has been studied by many authors, in various contexts, starting with James \cite{Jam1}. We prove, under the assumption that the inner product $\scal{x,y}$ has negative real part, that the following statements are equivalent: $\|x+y\|^2=\|x\|^2+\|y\|^2$; $\|x\|^2+\|y\|^2\in V(|x+y|^2)$; there exists $\varphi\in S_{|x|^2}(\mathcal{A})\cap S_{|y|^2}(\mathcal{A})$ such that $\varphi(\Re(\scal{x,y}))=0$.

In the general context of (complex) normed linear spaces $\mathscr{X}$, there were several attempts to extend the notion of orthogonality for two vectors $x$ and $y$. More exactly, $x$ and $y$ are \textit{orthogonal in the Roberts sense} (in notation, $x\perp_Ry$; cf. \cite[p. 56]{Rob}) if $\norm{x+\lambda y}=\norm{x-\lambda y},\ \lambda\in\mathbb{C}$. The concept of \textit{Birkhoff-James orthogonality} (in notation, $x\perp_By$), has been suggested by G. Birkhoff \cite{Bir} and R.C. James \cite{Jam} as $\norm{x+\lambda y}\ge\norm{x},\ \lambda\in\mathbb{C}$. In the framework of pre-Hilbert $C^*$-modules these notions have been studied, for example, in \cite{AR1,AR4,BG,ZM}.

The main part of this paper is devoted to the study of another concept of orthogonality, namely the \textit{Pythagoras orthogonality}. A vector $x$ is said to be \textit{orthogonal in the Pythagoras sense} to a vector $y$ (in notation, $x\perp_P y$) if
\begin{equation*}
\norm{x+\lambda y}^2=\norm{x}^2+|\lambda|^2\norm{y}^2,\quad\lambda\in\mathbb{C}.
\end{equation*}
If $x\perp_P y$ then, clearly, $x$ and $y$ \textit{satisfy the parallelogram law}, that is
\begin{equation*}
\norm{x+\lambda y}^2+\norm{x-\lambda y}^2=2(\norm{x}^2+|\lambda|^2\norm{y}^2),\quad\lambda\in\mathbb{C}.
\end{equation*}
We start by presenting the main properties of Pythagoras orthogonality and discuss its relationship with the parallelogram law, Roberts orthogonality, Birkhoff--James orthogonality and inner product orthogonality. Pythagoras orthogonality implies both the parallelogram law and Birkhoff--James orthogonality. We show that, for two elements $x$ and $y$ in $\mathscr{E}$ (a pre-Hilbert module over a unital $C^*$-algebra) such that $|y|^2$ is a positive multiple of the identity, the converse is also true. We finally characterize the Pythagoras orthogonality for two operators $A$ and $B$ in $\mathcal{L}(\mathscr{H})$ (regarded as a Hilbert module over itself) as follows. Under the assumptions that $\rank(A+\alpha_1 B)>1$ and $\Re(\alpha_2A^*B)\ge 0$ for certain $\alpha_1,\alpha_2\in\mathbb{C},\ \alpha_2\ne 0$, $A$ and $B$ are orthogonal in the Pythagoras sense if and only if $A$ and $B$ verify the parallelogram law and there exists a sequence $(\xi_n)_{n\ge 0}$ of unit vectors in $\mathscr{H}$ such that $\|A\xi_n\|\xrightarrow{n\to\infty}\|A\|,\ \|B\xi_n\|\xrightarrow{n\to\infty}\|B\|$ and $\scal{A\xi_n,B\xi_n}\xrightarrow{n\to\infty}0$ if and only if $A$ and $B$ verify the parallelogram law and there exists a sequence $(\xi_n)_{n\ge 0}$ of unit vectors in $\mathscr{H}$ such that $\|(A+\lambda B)\xi_n\|^2\xrightarrow{n\to\infty}\|A\|^2+|\lambda|^2\|B\|^2$ for every $\lambda\in\mathbb{C}$. Several examples are given for illustrative purposes.

\section{``Triangle Equalities''}

We start our work by the observation that the equality case in the triangle inequality for two elements $x$ and $y$ in a normed linear space $\mathscr{X}$ is preserved for their positive multiples $\alpha x\ (\alpha\ge 0)$ and $\beta y\ (\beta\ge 0)$.
\begin{lemma}[\protect{\cite[Lemma 2.1]{Abr}}]\label{lem2}
Let $x$ and $y$ be two vectors in a normed linear space $\mathscr{X}$ such that
$\|x+y\|=\|x\|+\|y\|$. Then $\|\alpha x+\beta y\| =\alpha\|x\|+\beta\|y\|$ for every $\alpha,\beta\ge 0$.
\end{lemma}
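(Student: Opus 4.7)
The plan is to establish the equality by combining a triangle-inequality upper bound with a matching lower bound obtained by a complementary decomposition. Since the claim is symmetric in $(\alpha,x)$ and $(\beta,y)$, I may assume without loss of generality that $0\le\beta\le\alpha$; the trivial cases $\alpha=0$ or $\beta=0$ are immediate (in the latter case, $\|\alpha x\|=\alpha\|x\|$ is the usual homogeneity of the norm). So I focus on $\alpha\ge\beta>0$.

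First I would write the affine decomposition
\begin{equation*}
\alpha x+\beta y=\beta(x+y)+(\alpha-\beta)x,
\end{equation*}
and apply the triangle inequality together with the hypothesis $\|x+y\|=\|x\|+\|y\|$ to obtain
\begin{equation*}
\|\alpha x+\beta y\|\le\beta\|x+y\|+(\alpha-\beta)\|x\|=\beta(\|x\|+\|y\|)+(\alpha-\beta)\|x\|=\alpha\|x\|+\beta\|y\|.
\end{equation*}
This is the easy half.

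Next, for the reverse inequality, I would use the dual decomposition
\begin{equation*}
\alpha(x+y)=(\alpha x+\beta y)+(\alpha-\beta)y,
\end{equation*}
and again invoke the triangle inequality and the hypothesis:
\begin{equation*}
\alpha(\|x\|+\|y\|)=\alpha\|x+y\|\le\|\alpha x+\beta y\|+(\alpha-\beta)\|y\|,
\end{equation*}
which after rearrangement yields $\|\alpha x+\beta y\|\ge\alpha\|x\|+\beta\|y\|$. Combining the two bounds gives the desired equality. The symmetric case $\beta\ge\alpha$ is handled by reversing the roles of $x$ and $y$ in the two decompositions.

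There is no real obstacle here; the only subtle point is choosing the right decompositions so that one application of the triangle inequality produces the desired bound without incurring extra terms of the wrong sign. The trick is that both decompositions use the already-known extremal vector $x+y$ (for which the triangle inequality is saturated) together with a positive multiple of one of $x$ or $y$, which keeps the combination in the regime where the hypothesis can be exploited directly.
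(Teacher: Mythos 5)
Your proof is correct, and both decompositions are the right ones: the upper bound $\|\alpha x+\beta y\|\le\beta\|x+y\|+(\alpha-\beta)\|x\|$ and the lower bound $\alpha\|x+y\|\le\|\alpha x+\beta y\|+(\alpha-\beta)\|y\|$ (valid since $\alpha-\beta\ge 0$) combine exactly as you claim. The paper itself gives no proof of this lemma --- it is quoted verbatim from \cite[Lemma 2.1]{Abr} --- and your argument is the standard one found there, so there is nothing to compare beyond noting that your write-up is complete and self-contained.
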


An equality of the form $\|\alpha x+\beta y\|=|\alpha|\|x\|+|\beta|\|y\|$ ($\alpha,\beta\in\mathbb{C},\ \alpha,\beta\ne 0$) can be reformulated for scalars $\alpha,\beta$ belonging to the unit circle $\T$. More precisely, the following holds.
\begin{proposition}
Let $x$ and $y$ be two vectors of a normed linear space $\mathscr{X}$. The following statements are equivalent:
\begin{enumerate}[$(i)$]
\item $\|\alpha x+\beta y\|=|\alpha|\|x\|+|\beta|\|y\|$ for some nonzero scalars $\alpha,\beta\in\mathbb{C}$.
\item $\|\alpha x+\beta y\|=\| x\|+\|y\|$ for some $\alpha,\beta\in\T$.
\end{enumerate}
\end{proposition}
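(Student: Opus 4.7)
The plan is to observe that $(ii) \Rightarrow (i)$ is immediate, since any $\alpha,\beta \in \T$ satisfy $|\alpha|=|\beta|=1$, making the two right-hand sides literally the same expression. The entire content lies in the reverse implication, and for that I would use a ``polar decomposition + scaling'' argument based on Lemma~\ref{lem2}.

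For $(i) \Rightarrow (ii)$, write $\alpha = |\alpha| \omega_1$ and $\beta = |\beta| \omega_2$ with $\omega_1, \omega_2 \in \T$ (possible because $\alpha,\beta \neq 0$), and introduce the auxiliary vectors $u := \omega_1 x$ and $v := \omega_2 y$. Then $\|u\| = \|x\|$, $\|v\| = \|y\|$, and the assumed equality reads
\begin{equation*}
\||\alpha| u + |\beta| v\| = |\alpha|\|u\| + |\beta|\|v\|.
\end{equation*}
Setting $x' := |\alpha| u$ and $y' := |\beta| v$ (both nonzero), this is precisely the triangle equality $\|x'+y'\| = \|x'\| + \|y'\|$. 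I can now invoke Lemma~\ref{lem2} with the positive scalars $1/|\alpha|$ and $1/|\beta|$ to conclude
\begin{equation*}
\|u + v\| = \bigl\| \tfrac{1}{|\alpha|} x' + \tfrac{1}{|\beta|} y' \bigr\| = \tfrac{1}{|\alpha|}\|x'\| + \tfrac{1}{|\beta|}\|y'\| = \|u\| + \|v\|.
\end{equation*}
Translating back via $u = \omega_1 x$ and $v = \omega_2 y$ yields $\|\omega_1 x + \omega_2 y\| = \|x\| + \|y\|$, which is $(ii)$ with the choice $\alpha = \omega_1$, $\beta = \omega_2$ in $\T$.

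There is no genuine obstacle here; the only thing to watch is that Lemma~\ref{lem2} is being applied to rescale the coefficients \emph{down} to one rather than up, which is legitimate because $1/|\alpha|$ and $1/|\beta|$ are both strictly positive. Alternatively, if one wishes to avoid invoking Lemma~\ref{lem2}, the same conclusion can be obtained directly by assuming $|\alpha|\le|\beta|$ (without loss of generality), writing the decomposition $|\alpha| u + |\beta| v = |\alpha|(u+v) + (|\beta|-|\alpha|)v$, applying the triangle inequality, and rearranging to obtain $\|u\|+\|v\| \le \|u+v\|$; but the route through Lemma~\ref{lem2} is shorter and keeps the proof self-contained within the section.
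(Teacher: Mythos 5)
Your argument is correct and is essentially the paper's own proof: both set $x'=\alpha x$, $y'=\beta y$, read $(i)$ as the triangle equality $\|x'+y'\|=\|x'\|+\|y'\|$, and apply Lemma~\ref{lem2} with the positive scalars $1/|\alpha|$ and $1/|\beta|$ to land on $\bigl\|\tfrac{\alpha}{|\alpha|}x+\tfrac{\beta}{|\beta|}y\bigr\|=\|x\|+\|y\|$. The intermediate vectors $u=\omega_1 x$, $v=\omega_2 y$ are just a notational detour; the substance is identical.
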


\begin{proof}
We only have to prove the implication $(i)\Rightarrow(ii)$, the other one is obvious. Let $x'=\alpha x$ and $y'=\beta y$. Then $(i)$ takes the form $\|x'+y'\|=\|x'\|+\|y'\|$ so, by Lemma \ref{lem2}, $\|\frac{1}{|\alpha|}x'+\frac{1}{|\beta|}y'\|=\frac{1}{|\alpha|}\|x'\|+\frac{1}{|\beta|}\|y'\|$. In other words,
\begin{equation*}
\biggl\|\frac{\alpha}{|\alpha|}x+\frac{\beta}{|\beta|}y\biggr\|=\|x\|+\|y\|.
\end{equation*}
\end{proof}

The following result, characterizing the equality case in the triangle inequality for two elements of a pre-Hilbert $\mathcal{A}$-module has been formulated in \cite[Proposition 3]{Pop} using a representation of $\mathcal{A}$ on a Hilbert space. It will be presented here using the terminology of states. We would also like to mention that the equivalence $(i)\Leftrightarrow(iii)$ has been obtained in \cite[Theorem 2.1]{AR2a}.

\begin{proposition}[\protect{\cite[Proposition 3]{Pop}}]
Let $x,y$ be two elements in a pre-Hilbert module over a $C^*$-algebra $\mathcal{A}$. The following statements are equivalent:
\begin{enumerate}[$(i)$]
\item $\|x+y\|=\|x\|+\|y\|$.
\item $(\|x\|+\|y\|)^2\in V(|x+y|^2)$.
\item $\|x\|\|y\|\in V(\scal{x,y})$.
\end{enumerate}
If $\varphi$ is a given state on $\mathcal{A}$, then $\varphi(|x+y|^2)=(\|x\|+\|y\|)^2$ if and only if $\varphi(\scal{x,y})=\|x\|\|y\|$. In this case, $\varphi\in S_{|x|^2}(\mathcal{A})\cap S_{|y|^2}(\mathcal{A})$ and $\varphi(\scal{x,y}^*\scal{x,y})=\|x\|^2\|y\|^2$.
\end{proposition}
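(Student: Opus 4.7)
The plan is to cycle through the three conditions via a single unifying observation: for any state $\varphi$ on $\mathcal{A}$, the Cauchy--Schwarz inequality (both for the sesquilinear form induced by $\varphi$, and for the $\mathcal{A}$-valued inner product) forces several quantities to coincide as soon as one of the extremal values is attained. I would organize it as (i)$\Leftrightarrow$(ii), then (ii)$\Rightarrow$(iii)$\Rightarrow$(i), and read off the final ``$\varphi$-statement'' from the equalities that appeared along the way.

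First, (i)$\Leftrightarrow$(ii) is essentially a reminder that for a positive element $a\in\mathcal{A}$ one has $\|a\|=\max\{\varphi(a):\varphi\in S(\mathcal{A})\}$ and this maximum is attained. Applying this to $a=|x+y|^2$ and recalling that $\|x+y\|^2=\||x+y|^2\|$, the triangle inequality then squeezes $\|x+y\|^2$ between $\varphi(|x+y|^2)$ (for a witness state) and $(\|x\|+\|y\|)^2$, which gives both implications at once.

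For (ii)$\Rightarrow$(iii), pick $\varphi\in S(\mathcal{A})$ with $\varphi(|x+y|^2)=(\|x\|+\|y\|)^2$ and expand
\begin{equation*}
|x+y|^2=|x|^2+\scal{x,y}+\scal{y,x}+|y|^2.
\end{equation*}
Using $\varphi(|x|^2)\le\|x\|^2$, $\varphi(|y|^2)\le\|y\|^2$, and $\varphi(\scal{y,x})=\overline{\varphi(\scal{x,y})}$, I would get
\begin{equation*}
2\Re\varphi(\scal{x,y})\ge 2\|x\|\|y\|.
\end{equation*}
Combining with $\Re\varphi(\scal{x,y})\le|\varphi(\scal{x,y})|$ and the Cauchy--Schwarz inequality for $\varphi$ (applied to the sesquilinear form $(x,y)\mapsto\varphi(\scal{x,y})$), which gives $|\varphi(\scal{x,y})|^2\le\varphi(|x|^2)\varphi(|y|^2)\le\|x\|^2\|y\|^2$, forces equality everywhere. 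In particular, $\varphi(\scal{x,y})=\|x\|\|y\|$, as well as $\varphi(|x|^2)=\|x\|^2$ and $\varphi(|y|^2)=\|y\|^2$, so $\varphi\in S_{|x|^2}(\mathcal{A})\cap S_{|y|^2}(\mathcal{A})$. The conclusion $\varphi(\scal{x,y}^*\scal{x,y})=\|x\|^2\|y\|^2$ is then immediate by sandwiching: the module Cauchy--Schwarz inequality $\scal{x,y}^*\scal{x,y}\le\|x\|^2|y|^2$ gives the upper bound, while the state Cauchy--Schwarz $|\varphi(\scal{x,y})|^2\le\varphi(\scal{x,y}^*\scal{x,y})$ (applied after passing to the unitization if necessary) gives the matching lower bound.

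For (iii)$\Rightarrow$(i), given $\varphi$ with $\varphi(\scal{x,y})=\|x\|\|y\|$, the same Cauchy--Schwarz chain $\|x\|^2\|y\|^2=|\varphi(\scal{x,y})|^2\le\varphi(|x|^2)\varphi(|y|^2)\le\|x\|^2\|y\|^2$ yields $\varphi(|x|^2)=\|x\|^2$, $\varphi(|y|^2)=\|y\|^2$, and expanding $|x+y|^2$ produces $\varphi(|x+y|^2)=(\|x\|+\|y\|)^2$, whence $\|x+y\|^2\ge(\|x\|+\|y\|)^2$ and then equality by the triangle inequality. The statement about a fixed $\varphi$ and the listed additional equalities are exactly the intermediate equalities extracted in the two arguments above. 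The only subtle point — and what I expect to be the main technical irritation — is handling the non-unital case when invoking the state Cauchy--Schwarz $|\varphi(a)|^2\le\varphi(a^*a)$; this is dispatched by passing to the canonical unitization of $\mathcal{A}$ and extending $\varphi$ as a state.
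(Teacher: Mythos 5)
Your argument is correct. Note that the paper offers no proof of this proposition at all: it is quoted from \cite[Proposition 3]{Pop}, and the authors explicitly remark that the proof there proceeds by representing $\mathcal{A}$ on a Hilbert space, the statement being merely ``presented here using the terminology of states.'' Your proof, by contrast, is intrinsic and state-theoretic: you combine the attainment of $\|a\|$ in $V(a)$ for positive $a$, the Cauchy--Schwarz inequality for the positive sesquilinear form $(u,v)\mapsto\varphi(\scal{u,v})$, and the module-valued Cauchy--Schwarz inequality $\scal{x,y}^*\scal{x,y}\le\|x\|^2|y|^2$, and squeeze a chain of inequalities into equalities. This is precisely the technique the paper itself deploys later (compare the proof of Theorem \ref{t6}, where $(ii)\Rightarrow(iii)$ is obtained by the same expansion-and-squeeze), so your route is arguably better matched to the paper's stated aim than the representation-theoretic original; it also delivers the fixed-$\varphi$ biconditional and the supplementary identities as genuine by-products rather than separate verifications. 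Two minor points: deducing $\varphi(|x|^2)=\|x\|^2$ and $\varphi(|y|^2)=\|y\|^2$ from $\varphi(|x|^2)\varphi(|y|^2)=\|x\|^2\|y\|^2$ tacitly assumes $\|x\|\|y\|\ne 0$ (the degenerate case is trivial, but is cleanest to dispatch by returning to the expansion of $\varphi(|x+y|^2)$); and your unitization remark is indeed the correct fix for $|\varphi(a)|^2\le\varphi(a^*a)$ in the non-unital case, and is needed only for the final identity $\varphi(\scal{x,y}^*\scal{x,y})=\|x\|^2\|y\|^2$.
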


We now describe a triangle ``equality'' in the context of pre-Hilbert $C^*$-modules.

\begin{proposition}\label{c5}
Let $x$ and $y$ be two elements of a pre-Hilbert $\mathcal{A}$-module. The following statements are equivalent:
\begin{enumerate}[$(i)$]
\item $\||x|^2+|y|^2\|=\|x\|^2+\|y\|^2$.
\item $\||x||y|\|=\|x\|\|y\|$.
\item $S_{|x|^2}(\mathcal{A})\cap S_{|y|^2}(\mathcal{A})\ne\emptyset$.
\item $\|x\|^2\|y\|^2\in V(|x|^2|y|^2)$.
\item $\|x\|^2+\|y\|^2\in V(|x|^2+|y|^2)$.
\end{enumerate}
\end{proposition}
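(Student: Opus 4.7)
The backbone of my approach will be a single ``state-multiplicativity'' lemma: for a state $\varphi$ on $\mathcal{A}$ (passing to the unitization if necessary) and a positive element $a\in\mathcal{A}$ satisfying $\varphi(a)=\|a\|$, one has $\varphi(ab)=\varphi(ba)=\|a\|\varphi(b)$ for every $b\in\mathcal{A}$. I would prove this by a short Cauchy--Schwarz argument: the positive element $a':=\|a\|-a$ satisfies $(a')^{2}\le\|a\|\,a'$, hence $\varphi((a')^{2})\le\|a\|\varphi(a')=0$, and Cauchy--Schwarz for states then forces $\varphi(a'b)=\varphi(ba')=0$. I would also record the corollary that $\varphi(|y|^{2})=\|y\|^{2}$ already entails $\varphi(|y|)=\|y\|$, via $|y|^{2}\le\|y\|\,|y|$.

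With this tool in hand I would place $(iii)$ at the hub of the equivalences. A state $\varphi\in S_{|x|^{2}}(\mathcal{A})\cap S_{|y|^{2}}(\mathcal{A})$ immediately yields $\varphi(|x|^{2}+|y|^{2})=\|x\|^{2}+\|y\|^{2}$, which is $(v)$ and, combined with the bound $\||x|^{2}+|y|^{2}\|\le\|x\|^{2}+\|y\|^{2}$, also $(i)$. Applying the state-multiplicativity lemma to $|x|^{2}$ gives $\varphi(|x|^{2}|y|^{2})=\|x\|^{2}\varphi(|y|^{2})=\|x\|^{2}\|y\|^{2}$, which is $(iv)$. Applying it twice to $|y|$ (through the corollary) yields $\varphi(|y|\,|x|^{2}\,|y|)=\|y\|^{2}\varphi(|x|^{2})=\|x\|^{2}\|y\|^{2}$, and combining with the $C^{*}$-identity $\||x|\,|y|\|^{2}=\||y|\,|x|^{2}\,|y|\|$ and the submultiplicativity bound $\le\|x\|^{2}\|y\|^{2}$ delivers $(ii)$.

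For the converses back to $(iii)$, each of $(i)$ and $(v)$ produces a state $\varphi$ with $\varphi(|x|^{2}+|y|^{2})=\|x\|^{2}+\|y\|^{2}$ (for $(i)$, pick any state attaining the norm of the positive element $|x|^{2}+|y|^{2}$); the individual bounds $\varphi(|x|^{2})\le\|x\|^{2}$ and $\varphi(|y|^{2})\le\|y\|^{2}$ with sum equal to $\|x\|^{2}+\|y\|^{2}$ then force both equalities. For $(iv)\Rightarrow(iii)$ I would run the Cauchy--Schwarz sandwich
\begin{equation*}
\|x\|^{4}\|y\|^{4}=|\varphi(|x|^{2}|y|^{2})|^{2}\le\varphi(|x|^{4})\,\varphi(|y|^{4})\le\|x\|^{2}\varphi(|x|^{2})\cdot\|y\|^{2}\varphi(|y|^{2})\le\|x\|^{4}\|y\|^{4},
\end{equation*}
using $|x|^{4}\le\|x\|^{2}|x|^{2}$ in the middle; the collapse of this chain forces $\varphi(|x|^{2})=\|x\|^{2}$ and $\varphi(|y|^{2})=\|y\|^{2}$. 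For $(ii)\Rightarrow(iii)$ I would pick $\varphi$ attaining the norm of the positive element $|y|\,|x|^{2}\,|y|$, equal to $\||x|\,|y|\|^{2}=\|x\|^{2}\|y\|^{2}$; the domination $|y|\,|x|^{2}\,|y|\le\|x\|^{2}|y|^{2}$ forces $\varphi(|y|^{2})=\|y\|^{2}$ at once, and then the state-multiplicativity lemma (via the corollary, applied twice to $|y|$) reduces $\varphi(|y|\,|x|^{2}\,|y|)=\|x\|^{2}\|y\|^{2}$ to $\|y\|^{2}\varphi(|x|^{2})=\|x\|^{2}\|y\|^{2}$.

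The main obstacle is that $(ii)$ and $(iv)$ refer to \emph{products} of the noncommuting positive operators $|x|$ and $|y|$, while $(iii)$ asks for a \emph{single} state witnessing the norm of each separately; the state-multiplicativity lemma is precisely the device that lets one factor these products inside $\varphi$, and the remainder of the proof is a careful juggling of the Cauchy--Schwarz inequality and the $C^{*}$-identity.
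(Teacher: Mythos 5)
Your proof is correct, but it takes a genuinely different route from the paper's. The paper proves only the implication $(v)\Rightarrow(i)$ directly (via the one-line estimate $\|a\|+\|b\|=\varphi(a+b)\le\|a+b\|$ for positive $a,b$) and outsources every other implication to the literature: the equivalence $(i)\Leftrightarrow(ii)$ to Kittaneh's norm inequality for sums of positive operators, and the links with $(iii)$ and $(iv)$ to Aramba\v{s}i\'c--Raji\'c and Nakamoto--Takahasi. You instead give a self-contained argument organized around $(iii)$ as a hub, powered by the state-multiplicativity lemma ($\varphi(a)=\|a\|$ for positive $a$ forces $\varphi(ab)=\varphi(ba)=\|a\|\varphi(b)$), proved by the standard Cauchy--Schwarz trick applied to $\|a\|-a$ in the unitization. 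All of your individual steps are sound: the lemma and its corollary $\varphi(|y|^2)=\|y\|^2\Rightarrow\varphi(|y|)=\|y\|$ are correct, the implications out of $(iii)$ follow cleanly, and the three ``collapse'' arguments back to $(iii)$ from $(i)/(v)$, from $(iv)$, and from $(ii)$ (the last two using the existence of a norming state for a normal, here positive, element) are all valid. What your approach buys is transparency and independence from external references; what it costs is length, and it silently divides by $\|x\|^2$ and $\|y\|^2$ in the arguments for $(iv)\Rightarrow(iii)$ and $(ii)\Rightarrow(iii)$, so you should add one sentence disposing of the trivial cases $x=0$ or $y=0$ (where all five statements hold automatically, since $S_{|y|^2}(\mathcal{A})$ is nonempty for any $y$).
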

\begin{proof}
We only prove $(v)\Rightarrow(i)$, the rest can be concluded from \cite[Proposition 3.3]{Kit},  \cite[Theorem 2.1]{AR2a}, and \cite[Theorem 1]{NT}). To this end, it is enough to show that  if $a$ and $b$ are two positive elements of a $C^*$-algebra $\mathcal{A}$, and  $\|a\|+\|b\|\in V(a+b)$, then $\|a+b\|=\|a\|+\|b\|$:

Let $\varphi\in S(\mathcal{A})$ be such that $\varphi(a+b)=\|a\|+\|b\|$. Then
\begin{equation*}
\|a\|+\|b\|=\varphi(a+b)\le\|a+b\|.
\end{equation*}
We deduce immediately that $\|a+b\|=\|a\|+\|b\|$, as required.
\end{proof}

In particular, for any $C^*$-algebra $\mathcal{A}$ (regarded as a Hilbert $\mathcal{A}$-module), the following holds.

\begin{corollary}\label{c6}
Let $a$ and $b$ be two elements of $\mathcal{A}$. The following statements are equivalent:
\begin{enumerate}[$(i)$]
\item $\|a^*a+b^*b\|=\|a\|^2+\|b\|^2$.
\item $\|ab^*\|=\|a\|\|b\|$.
\end{enumerate}
\end{corollary}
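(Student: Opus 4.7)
The plan is to derive this corollary directly from Proposition~\ref{c5} by regarding $\mathcal{A}$ as a right Hilbert module over itself via $\scal{x,y}=x^*y$. With $x=a$ and $y=b$ in this module one has $|a|^2=a^*a$, $|b|^2=b^*b$, and the module norm coincides with the $C^*$-norm, so item~(i) of the Corollary is literally item~(i) of Proposition~\ref{c5}.

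The corresponding statement~(ii) of Proposition~\ref{c5} reads $\|(a^*a)^{1/2}(b^*b)^{1/2}\|=\|a\|\|b\|$, whereas item~(ii) of the Corollary is $\|ab^*\|=\|a\|\|b\|$. Hence the whole corollary will follow, with no further work, from the auxiliary identity
\[ \|ab^*\|=\bigl\|(a^*a)^{1/2}(b^*b)^{1/2}\bigr\| \]
valid for every pair of elements of a $C^*$-algebra. Producing this identity is the only real step in the argument; I expect it to be the main (and essentially the sole) obstacle.

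To verify it I would invoke the $C^*$-identity $\|z^*z\|=\|zz^*\|=\|z\|^2$ twice. Applied to $z=a(b^*b)^{1/2}$ it rewrites
\[ \|ab^*\|^2=\|(ab^*)(ab^*)^*\|=\|a(b^*b)a^*\|=\|zz^*\|=\|z^*z\|=\bigl\|(b^*b)^{1/2}(a^*a)(b^*b)^{1/2}\bigr\|. \]
Applied to $w=(a^*a)^{1/2}(b^*b)^{1/2}$ it gives
\[ \|w\|^2=\|w^*w\|=\bigl\|(b^*b)^{1/2}(a^*a)(b^*b)^{1/2}\bigr\|. \]
Comparing the two right-hand sides establishes the identity, after which Proposition~\ref{c5} delivers the equivalence (i)$\Leftrightarrow$(ii) and completes the proof. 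Everything else is bookkeeping.
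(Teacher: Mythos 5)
Your argument is correct, and it takes a genuinely different (and in some ways sharper) route than the paper's. The paper first replaces condition $(i)$ by $\|a^*ab^*b\|=\|a\|^2\|b\|^2$ (via Kittaneh's result behind Proposition~\ref{c5}) and then passes between this and $\|ab^*\|=\|a\|\|b\|$ by two separate chains of submultiplicativity estimates, one for each implication: for instance $\|a\|^2\|b\|^2=\|a^*ab^*b\|\le\|a^*\|\|ab^*\|\|b\|\le\|a\|^2\|b\|^2$ in one direction and a fourth-power computation with $\|ab^*ba^*ab^*ba^*\|$ in the other. You instead route everything through the equivalence $(i)\Leftrightarrow(ii)$ of Proposition~\ref{c5} and prove the exact norm identity
\begin{equation*}
\|ab^*\|=\bigl\|(a^*a)^{1/2}(b^*b)^{1/2}\bigr\|,
\end{equation*}
valid for all $a,b$, by two applications of the $C^*$-identity (both sides square to $\|(b^*b)^{1/2}(a^*a)(b^*b)^{1/2}\|$; your computations of $zz^*$, $z^*z$ and $w^*w$ check out). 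This makes the two conditions of the corollary literally term-by-term identical to those of Proposition~\ref{c5}, so the equivalence is immediate and symmetric, with no case-by-case inequality chasing. What you gain is the standalone identity itself, which is strictly more information than the equivalence; what the paper's version gains is that it stays entirely at the level of elementary norm inequalities without invoking square roots of positive elements. Both proofs ultimately rest on the same input from Proposition~\ref{c5}/Kittaneh.
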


\begin{proof}
The condition $(i)$ is equivalent with $\|a^*ab^*b\|=\|a\|^2\|b\|^2$ (see, \cite[Proposition 3.3]{Kit}).

The implication $(i)\Rightarrow(ii)$ follows by the inequalities:
\begin{equation*}
\|a\|^2\|b\|^2=\|a^*ab^*b\|\le\|a^*\|\|ab^*\|\|b\|\le\|a\|^2\|b\|^2.
\end{equation*}

Conversely, if $(ii)$ holds true, then
\begin{equation*}
\|a\|^4\|b\|^4=\|ab^*\|^4=\|ab^*ba^*ab^*ba^*\|\le\|a\|\|b\|^2\|a^*ab^*b\|\|a\|\le\|a\|^4\|b\|^4.
\end{equation*}
Consequently, $\|a^*ab^*b\|=\|a\|^2\|b\|^2$, which is equivalent with $(i)$, by Proposition \ref{c5} .
\end{proof}

\begin{corollary}
Let $x$ and $y$ be two elements of a pre-Hilbert $\mathcal{A}$-module. Then any two of the following statements imply the third one:
\begin{enumerate}[$(i)$]
\item $x$ and $y$ verify the parallelogram law:
\begin{equation*}
\|x+y\|^2+\|x-y\|^2=2(\|x\|^2+\|y\|^2).
\end{equation*}
\item $S_{|x|^2}(\mathcal{A})\cap S_{|y|^2}(\mathcal{A})\ne\emptyset$.
\item $S_{|x+y|^2}(\mathcal{A})\cap S_{|x-y|^2}(\mathcal{A})\ne\emptyset$.
\end{enumerate}
\end{corollary}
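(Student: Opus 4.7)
The plan is to reduce all three statements to equalities of norms in $\mathcal{A}$ using Proposition \ref{c5}, and then combine them by means of the parallelogram identity at the level of inner products. The crucial algebraic observation is that, in any pre-Hilbert $C^*$-module, a direct expansion of $\scal{x\pm y,x\pm y}$ yields
\begin{equation*}
|x+y|^2+|x-y|^2=2(|x|^2+|y|^2),
\end{equation*}
an identity that holds at the $\mathcal{A}$-valued level (independently of the parallelogram law for the norm). Taking norms gives
\begin{equation*}
\||x+y|^2+|x-y|^2\|=2\||x|^2+|y|^2\|.
\end{equation*}

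Next I would translate (ii) and (iii) via the equivalence $(i)\Leftrightarrow(iii)$ of Proposition \ref{c5}. Applied to the pair $(x,y)$, condition (ii) becomes $\||x|^2+|y|^2\|=\|x\|^2+\|y\|^2$. Applied to the pair $(x+y,x-y)$, condition (iii) becomes $\||x+y|^2+|x-y|^2\|=\|x+y\|^2+\|x-y\|^2$, which by the preceding identity is equivalent to $2\||x|^2+|y|^2\|=\|x+y\|^2+\|x-y\|^2$. Meanwhile (i) is simply $\|x+y\|^2+\|x-y\|^2=2(\|x\|^2+\|y\|^2)$.

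So the corollary reduces to the trivial linear-algebra fact that, given three quantities $A=\|x+y\|^2+\|x-y\|^2$, $B=\||x|^2+|y|^2\|$, and $C=\|x\|^2+\|y\|^2$, the three equations $A=2C$, $B=C$, and $2B=A$ are such that any two imply the third. Each implication is a one-line substitution: (ii)+(i)$\Rightarrow$(iii) gives $2B=2C=A$; (ii)+(iii)$\Rightarrow$(i) gives $A=2B=2C$; (i)+(iii)$\Rightarrow$(ii) gives $2B=A=2C$, hence $B=C$, which after invoking the equivalence (i)$\Leftrightarrow$(iii) of Proposition \ref{c5} returns (ii).

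There is no genuine obstacle: the entire content is packaged inside Proposition \ref{c5} and the algebraic parallelogram identity for $|\cdot|^2$. The only point that needs care is ensuring the identity $|x+y|^2+|x-y|^2=2(|x|^2+|y|^2)$ is written at the module level before taking norms, so that one may equate $\||x+y|^2+|x-y|^2\|$ with $2\||x|^2+|y|^2\|$ without a priori assuming the (numerical) parallelogram law.
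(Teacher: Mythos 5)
Your proposal is correct and follows essentially the same route as the paper: both translate $(ii)$ and $(iii)$ via Proposition \ref{c5} into the norm equalities $\||x|^2+|y|^2\|=\|x\|^2+\|y\|^2$ and $\||x+y|^2+|x-y|^2\|=\|x+y\|^2+\|x-y\|^2$, and both use the module-level identity $|x+y|^2+|x-y|^2=2(|x|^2+|y|^2)$ to link them, after which the two-imply-the-third statement is elementary arithmetic. Your write-up merely makes explicit the final substitutions that the paper dismisses with ``the conclusion then follows easily.''
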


\begin{proof}
By Proposition \ref{c5} condition $(ii)$ is equivalent to $\||x|^2+|y|^2\|=\|x\|^2+\|y\|^2$. Similarly, condition $(iii)$ can be replaced by:
\begin{equation*}
2\||x|^2+|y|^2\|=\||x+y|^2+|x-y|^2\|=\|x+y\|^2+\|x-y\|^2.
\end{equation*}
The conclusion then follows easily.
\end{proof}

Our next aim is to describe the equality case in the triangle inequality for two adjointable operators in a Hilbert $C^*$-module.
\begin{theorem}
Let $\mathscr{E}$ be a Hilbert module over the $C^*$-algebra $\mathcal{A}$, and let $s,t\in\mathcal{L}(\mathscr{E})$. The following conditions are equivalent:
\begin{enumerate}[$(i)$]
\item $\|s+t\|=\|s\|+\|t\|$.
\item There exist sequences $(\varphi_n)_{n\ge 0}$ (of states on $\mathcal{A}$) and $(x_n)_{n\ge 0}$ (of elements in $\mathscr{E}$) such that $\varphi_n(|x_n|^2)=1,\ n\ge 0$ and
\begin{equation*}
\varphi_n(\scal{sx_n,tx_n})\xrightarrow{n\to\infty}\|s\|\|t\|.
\end{equation*}
\item There exist sequences $(\varphi_n)_{n\ge 0}$ (of states on $\mathcal{A}$) and $(x_n)_{n\ge 0}$ (of elements in $\mathscr{E}$) such that $\varphi_n(|x_n|^2)\le 1,\ n\ge 0$ and
\begin{equation*}
\varphi_n(\scal{sx_n,tx_n})\xrightarrow{n\to\infty}\|s\|\|t\|.
\end{equation*}
\end{enumerate}
\end{theorem}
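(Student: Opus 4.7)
I would close the cycle $(ii)\Rightarrow(iii)\Rightarrow(i)\Rightarrow(ii)$, the first implication being trivial. Two ingredients are used throughout: the module-valued inequality $\scal{sx,sx}\le\norm{s}^2\scal{x,x}$ in $\mathcal{A}$ (coming from $s^*s\le\norm{s}^2 I$ in $\mathcal{L}(\mathscr{E})$ together with the C${}^*$-identity), and the Cauchy--Schwarz inequality $|\varphi(\scal{u,v})|^2\le\varphi(|u|^2)\varphi(|v|^2)$ valid for any state $\varphi$.

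For $(iii)\Rightarrow(i)$, given the data, applying $\varphi_n$ to the first inequality gives $\varphi_n(|sx_n|^2)\le\norm{s}^2\varphi_n(|x_n|^2)\le\norm{s}^2$ and, analogously, $\varphi_n(|tx_n|^2)\le\norm{t}^2$, whence state-Cauchy--Schwarz yields $|\varphi_n(\scal{sx_n,tx_n})|\le\norm{s}\norm{t}$. Since this last quantity tends to $\norm{s}\norm{t}$, the two preceding bounds must be saturated in the limit, so $\varphi_n(|sx_n|^2)\to\norm{s}^2$ and $\varphi_n(|tx_n|^2)\to\norm{t}^2$. Expanding
\[\varphi_n(|(s+t)x_n|^2)=\varphi_n(|sx_n|^2)+\varphi_n(|tx_n|^2)+2\Re\varphi_n(\scal{sx_n,tx_n}),\]
I see that the left side tends to $\norm{s}^2+\norm{t}^2+2\norm{s}\norm{t}=(\norm{s}+\norm{t})^2$. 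But the left side is also at most $\norm{s+t}^2\varphi_n(|x_n|^2)\le\norm{s+t}^2$; comparing yields $\norm{s+t}\ge\norm{s}+\norm{t}$, and the reverse inequality is automatic.

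For $(i)\Rightarrow(ii)$, I pick unit vectors $x_n\in\mathscr{E}$ with $\norm{(s+t)x_n}^2\to(\norm{s}+\norm{t})^2$ and, using that $|(s+t)x_n|^2$ is a positive (hence normal) element of $\mathcal{A}$, choose $\varphi_n\in S_{|(s+t)x_n|^2}(\mathcal{A})$, so that $\varphi_n(|(s+t)x_n|^2)=\norm{(s+t)x_n}^2\to(\norm{s}+\norm{t})^2$. Running the previous analysis in reverse: as the sum of three quantities bounded respectively by $\norm{s}^2$, $\norm{t}^2$, $2\norm{s}\norm{t}$ tends to the sum of those bounds, each summand must tend to its own bound, and the saturation of $\varphi_n(|sx_n|^2)\le\norm{s}^2\varphi_n(|x_n|^2)$ forces $\varphi_n(|x_n|^2)\to 1$; state-Cauchy--Schwarz then forces the imaginary part of $\varphi_n(\scal{sx_n,tx_n})$ to vanish, so $\varphi_n(\scal{sx_n,tx_n})\to\norm{s}\norm{t}$. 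Finally, to replace $\varphi_n(|x_n|^2)\to 1$ by the exact equality required in $(ii)$, I rescale: for $n$ large enough, set $y_n=x_n/\sqrt{\varphi_n(|x_n|^2)}$, so that $\varphi_n(|y_n|^2)=1$ and $\varphi_n(\scal{sy_n,ty_n})\to\norm{s}\norm{t}$.

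The main subtlety is the forcing principle used in both directions: a sum of bounded non-negative real sequences can approach the sum of their individual upper bounds only if each summand approaches its own bound. Once this is extracted, the only remaining technicality is converting the limiting normalization $\varphi_n(|x_n|^2)\to 1$ into an exact one, which is handled by rescaling by a scalar converging to $1$.
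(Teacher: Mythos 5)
Your proof is correct, and the core of it --- expanding $\varphi_n(|(s+t)x_n|^2)$ into three terms, bounding each by $\|s\|^2$, $\|t\|^2$, $2\|s\|\|t\|$ respectively, and invoking the forcing principle that a sum of bounded sequences converging to the sum of the bounds saturates each bound --- is exactly the paper's argument for both nontrivial implications. Where you genuinely deviate is in how the sequences are produced for $(i)\Rightarrow(ii)$: the paper quotes Lance's variational formula $\|s\|^2=\sup_{\varphi(|x|^2)=1}\varphi(|sx|^2)$, which hands it a sequence already satisfying the exact normalization $\varphi_n(|x_n|^2)=1$, whereas you take module-norm-one vectors $x_n$ norming $s+t$, choose $\varphi_n\in S_{|(s+t)x_n|^2}(\mathcal{A})$ (which exists since $|(s+t)x_n|^2$ is positive), deduce $\varphi_n(|x_n|^2)\to 1$ from the saturation of $\varphi_n(|sx_n|^2)\le\|s\|^2\varphi_n(|x_n|^2)$, and rescale. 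This is more self-contained --- it uses only the operator inequality $\scal{sx,sx}\le\|s\|^2\scal{x,x}$ and the existence of norming states for positive elements --- at the cost of the extra rescaling step and of implicitly assuming $\|s\|\ne 0$ when extracting $\varphi_n(|x_n|^2)\to 1$ (if $\|s\|=0$ or $\|t\|=0$ the whole theorem is trivial, so this is cosmetic, but you should say so, or norm with $t$ instead). Your explicit observation that Cauchy--Schwarz kills the imaginary part of $\varphi_n(\scal{sx_n,tx_n})$ is a detail the paper leaves tacit in $(i)\Rightarrow(ii)$; it is worth keeping.
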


\begin{proof}
It has been indicated in \cite[p. 37]{Lan} that, for any given state $\varphi$ of $\mathcal{A}$ and $x\in\mathscr{E}$ with $\varphi(|x|^2)=1$, the map $s\mapsto\varphi(\scal{x,sx})$ is a state of $\mathcal{L}(\mathscr{E})$. In addition, for any adjointable operator $s$ on $\mathscr{E}$,
\begin{equation}\label{eqlan}
\|s\|^2=\sup_{\phi(|x|^2)=1}\varphi(|sx|^2).
\end{equation}

$(i)\Rightarrow(ii)$. Let us consider, in view of \eqref{eqlan}, a sequence $(\varphi_n)_{n\ge 0}$ of states on $\mathcal{A}$ and a sequence $(x_n)_{n\ge 0}$ of elements in $\mathscr{E}$ such that $\varphi_n(|x_n|^2)=1,\ n\ge 0$ and
\begin{equation*}
\varphi_n(|(s+t)x_n|^2)\xrightarrow{n\to\infty}\|s+t\|^2.
\end{equation*}
We note that, for any $n\ge 0$,
\begin{equation*}
\begin{split}
\varphi_n(|(s+t)x_n|^2)& =\varphi_n(|sx_n|^2)+\varphi_n(|tx_n|^2)+\varphi_n(\scal{sx_n,tx_n})+\varphi_n(\scal{tx_n,sx_n})\\
& \le\|s\|^2+\|t\|^2+\|s^*t\|+\|t^*s\|\\
& \le(\|s\|+\|t\|)^2.
\end{split}
\end{equation*}
We pass to limit (as $n\to\infty$) to deduce, by $(i)$, that
\begin{equation*}
\varphi_n(\scal{sx_n,tx_n})\xrightarrow{n\to\infty}\|s\|\|t\|,
\end{equation*}
which proves $(ii)$.

The implication $(ii)\Rightarrow(iii)$ is obvious.

$(iii)\Rightarrow(i)$. Let $(\varphi_n)_{n\ge 0}$, and let $(x_n)_{n\ge 0}$ be sequences as in $(iii)$. By passing to limit (as $n\to\infty$) in the inequalities
\begin{equation*}
|\varphi_n(\scal{sx_n,tx_n})|\le\varphi_n(|sx_n|^2)^{1/2}\varphi_n(|tx_n|^2)^{1/2}\le\|s\|\|t\|\varphi_n(|x_n|^2)\le\|s\|\|t\|,\ n\ge 0,
\end{equation*}
we obtain that
\begin{equation*}
\varphi_n(|sx_n|^2)\xrightarrow{n\to\infty}\|s\|^2\text{ and }\varphi_n(|tx_n|^2)\xrightarrow{n\to\infty}\|t\|^2.
\end{equation*}
Hence,
\begin{equation*}
\varphi_n(|(s+t)x_n|^2)=\varphi_n(|sx_n|^2)+\varphi_n(|tx_n|^2)+2\Re\varphi_n(\scal{sx_n,tx_n})\xrightarrow{n\to\infty}(\|s\|+\|t\|)^2.
\end{equation*}
Letting again $n\to\infty$ in the inequalities $\varphi_n(|(s+t)x_n|^2)\le\|s+t\|^2\le(\|s\|+\|t\|)^2,\ n\ge 0$ we finally get the triangle ``equality'' in $(i)$.
\end{proof}

\begin{corollary}
Let $\mathcal{A}$ be a $C^*$-algebra, and let $a,b\in\mathcal{A}$. The following conditions are equivalent:
\begin{enumerate}[$(i)$]
\item $\|a+b\|=\|a\|+\|b\|$.
\item There exist sequences $(\varphi_n)_{n\ge 0}$ (of states on $\mathcal{A}$) and $(c_n)_{n\ge 0}$ (of elements in $\mathcal{A}$) such that $\varphi_n(c_n^*c_n)=1,\ n\ge 0$ and
\begin{equation*}
\varphi_n(c_n^*a^*bc_n)\xrightarrow{n\to\infty}\|a\|\|b\|.
\end{equation*}
\item There exist sequences $(\varphi_n)_{n\ge 0}$ (of states on $\mathcal{A}$) and $(c_n)_{n\ge 0}$ (of elements in $\mathcal{A}$) such that $\varphi_n(c_n^*c_n)\le 1,\ n\ge 0$ (or, in particular, $\|c_n\|\le 1,\ n\ge 0$) and
\begin{equation*}
\varphi_n(c_n^*a^*bc_n)\xrightarrow{n\to\infty}\|a\|\|b\|.
\end{equation*}
\end{enumerate}
\end{corollary}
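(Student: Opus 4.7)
The plan is to deduce this corollary directly from the preceding Theorem by viewing the $C^*$-algebra $\mathcal{A}$ as a Hilbert $\mathcal{A}$-module over itself, with inner product $\langle x, y\rangle := x^*y$, and by translating multiplication by elements of $\mathcal{A}$ into adjointable operators on this module.

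First I would set $\mathscr{E} = \mathcal{A}$ (as a Hilbert $\mathcal{A}$-module) and, for every $a\in\mathcal{A}$, introduce the left multiplication operator $L_a\colon\mathcal{A}\to\mathcal{A}$, $L_a(c) = ac$. A quick verification shows that $L_a$ is adjointable with $L_a^* = L_{a^*}$ (since $\langle L_a c, d\rangle = (ac)^*d = c^*a^*d = \langle c, L_{a^*}d\rangle$) and that $\|L_a\| = \|a\|$ (the latter being a consequence of the $C^*$-identity applied to $c = a^*$ of unit norm, together with submultiplicativity). The map $a\mapsto L_a$ therefore embeds $\mathcal{A}$ isometrically as a $C^*$-subalgebra of $\mathcal{L}(\mathscr{E})$ which preserves sums.

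Next I would apply the preceding Theorem with $s = L_a$, $t = L_b$, and with the sequence of vectors $x_n = c_n\in\mathscr{E} = \mathcal{A}$. Under this dictionary, the quantities appearing in the Theorem become
\begin{equation*}
\|s+t\| = \|L_{a+b}\| = \|a+b\|,\quad \varphi_n(|x_n|^2) = \varphi_n(c_n^*c_n),
\end{equation*}
and
\begin{equation*}
\varphi_n(\langle sx_n, tx_n\rangle) = \varphi_n\bigl((ac_n)^*(bc_n)\bigr) = \varphi_n(c_n^*a^*bc_n),
\end{equation*}
while the condition $\varphi_n(\langle sx_n, tx_n\rangle) \to \|s\|\|t\|$ becomes $\varphi_n(c_n^*a^*bc_n)\to\|a\|\|b\|$. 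The equivalences $(i)\Leftrightarrow(ii)\Leftrightarrow(iii)$ now follow word-for-word from the corresponding equivalences in the Theorem.

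The only additional observation needed is the parenthetical remark in $(iii)$ that the condition $\varphi_n(c_n^*c_n)\le 1$ is implied by $\|c_n\|\le 1$: indeed, since $\varphi_n$ is a state and $c_n^*c_n \ge 0$, one has $\varphi_n(c_n^*c_n) \le \|c_n^*c_n\| = \|c_n\|^2 \le 1$. I do not expect any genuine obstacle here; the only care required is to confirm the identifications $L_a^* = L_{a^*}$ and $\|L_a\| = \|a\|$ so that the hypotheses of the Theorem translate cleanly. Everything else is a direct transcription.
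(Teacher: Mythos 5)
Your proposal is correct and coincides with the paper's intended derivation: the corollary is stated there without proof precisely because it is the specialization of the preceding theorem to $\mathscr{E}=\mathcal{A}$ with $s=L_a$, $t=L_b$ and $x_n=c_n$, exactly as you carry out. Your verifications that $L_a^*=L_{a^*}$, $\|L_a\|=\|a\|$, and $\langle L_ac_n,L_bc_n\rangle=c_n^*a^*bc_n$ are the only points needing checking, and they are handled correctly.
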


\section{Pythagoras Identities}

We characterize the Pythagoras identity for two vectors in a pre-Hilbert $C^*$-modules under the assumption that their inner product has negative real part.
\begin{proposition}\label{p1}
Let $x$ and $y$ be two elements in a pre-Hilbert $C^*$-module $\mathscr{E}$ such that $\Re(\scal{x,y})\le 0$. The following conditions are equivalent:
\begin{itemize}
\item[$(i)$] $\|x+y\|^2=\|x\|^2+\|y\|^2.$
\item[$(ii)$] $\||x|^2+2\Re(\scal{x,y})+|y|^2\|=\||x|^2+|y|^2\|$ and $\||x||y|\|=\|x\|\|y\|.$
\end{itemize}
\end{proposition}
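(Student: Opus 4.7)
The plan is to exploit the algebraic identity
\begin{equation*}
|x+y|^2 = |x|^2 + 2\Re(\scal{x,y}) + |y|^2
\end{equation*}
in $\mathcal{A}$ together with the sign hypothesis on $\Re(\scal{x,y})$, and then invoke Proposition \ref{c5} to handle the second condition in $(ii)$. The assumption $\Re(\scal{x,y})\le 0$ means that $-2\Re(\scal{x,y})$ is a positive element of $\mathcal{A}$, so in the order of $\mathcal{A}$ we have
\begin{equation*}
0 \;\le\; |x+y|^2 \;=\; |x|^2+|y|^2+2\Re(\scal{x,y}) \;\le\; |x|^2+|y|^2.
\end{equation*}
Since the $C^*$-norm is monotone on positive elements, and since $\||x|^2+|y|^2\|\le\|x\|^2+\|y\|^2$ by the triangle inequality in $\mathcal{A}$, this yields the chain
\begin{equation*}
\|x+y\|^2 \;=\; \bigl\||x|^2+2\Re(\scal{x,y})+|y|^2\bigr\| \;\le\; \bigl\||x|^2+|y|^2\bigr\| \;\le\; \|x\|^2+\|y\|^2.
\end{equation*}

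For the implication $(i)\Rightarrow(ii)$, I would observe that hypothesis $(i)$ forces this entire chain to collapse into equalities. The outer equality $\||x|^2+|y|^2\|=\|x\|^2+\|y\|^2$ is exactly the first condition in item $(i)$ of Proposition \ref{c5}, hence is equivalent to $\||x||y|\|=\|x\|\|y\|$. The remaining equality is precisely $\||x|^2+2\Re(\scal{x,y})+|y|^2\|=\||x|^2+|y|^2\|$, as required.

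Conversely, for $(ii)\Rightarrow(i)$, I would again apply Proposition \ref{c5} to rewrite $\||x||y|\|=\|x\|\|y\|$ as $\||x|^2+|y|^2\|=\|x\|^2+\|y\|^2$, and then combine it with the first part of $(ii)$ to obtain
\begin{equation*}
\|x+y\|^2 = \bigl\||x|^2+2\Re(\scal{x,y})+|y|^2\bigr\| = \bigl\||x|^2+|y|^2\bigr\| = \|x\|^2+\|y\|^2.
\end{equation*}
The only delicate point is making sure the operator inequality in $\mathcal{A}$ is used correctly: the hypothesis on $\Re(\scal{x,y})$ is essential precisely because it makes $|x+y|^2$ dominated (rather than merely comparable) by $|x|^2+|y|^2$, which in turn lets the norm-monotonicity on the positive cone carry the argument. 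I do not anticipate any real obstacle beyond this bookkeeping, since everything else reduces to Proposition \ref{c5}.
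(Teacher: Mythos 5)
Your proof is correct and follows essentially the same route as the paper: both establish the chain $\|x+y\|^2=\||x|^2+2\Re(\scal{x,y})+|y|^2\|\le\||x|^2+|y|^2\|\le\|x\|^2+\|y\|^2$ via norm-monotonicity on the positive cone (using $\Re(\scal{x,y})\le 0$), then note that $(i)$ collapses the chain to equalities and convert the outer equality into $\||x||y|\|=\|x\|\|y\|$ by Proposition~\ref{c5} (the paper cites Kittaneh's result directly, which underlies that proposition). The converse direction is handled identically in both.
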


\begin{proof}
Let us firstly note that
\begin{equation}\label{eq1}
\begin{split}
\|x+y\|^2& =\||x|^2+2\Re(\scal{x,y})+|y|^2\|\\
& \le\||x|^2+|y|^2\|\le\|x\|^2+\|y\|^2.
\end{split}
\end{equation}
If $(i)$ holds true, then the inequalities in \eqref{eq1} become equalities. Also, by \cite[Proposition 3.3]{Kit}, the triangle equality $\||x|^2+|y|^2\|=\|x\|^2+\|y\|^2$ can be written in the form $\||x||y|\|=\|x\|\|y\|$, which is exactly the last condition of $(ii)$.
The converse follows the same path.
\end{proof}

\begin{theorem}\label{t6}
Let $x$ and $y$ be two elements in a pre-Hilbert $C^*$-module $\mathscr{E}$ such that $\Re(\scal{x,y})\le 0$. The following conditions are equivalent:
\begin{enumerate}[$(i)$]
\item $\|x+y\|^2=\|x\|^2+\|y\|^2$.
\item $\|x\|^2+\|y\|^2\in V(|x+y|^2)$.
\item There exists $\varphi\in S_{|x|^2}(\mathcal{A})\cap S_{|y|^2}(\mathcal{A})$ such that
$\varphi(\Re(\scal{x,y}))=0$.
\end{enumerate}
\end{theorem}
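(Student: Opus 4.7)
My plan is to close the cycle $(iii)\Rightarrow(ii)\Rightarrow(i)\Rightarrow(iii)$, with the real work concentrated in the last implication; the hypothesis $\Re(\scal{x,y})\le 0$ enters decisively there.

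First I would establish $(iii)\Rightarrow(ii)$ by a direct application. Given $\varphi\in S_{|x|^2}(\mathcal{A})\cap S_{|y|^2}(\mathcal{A})$ with $\varphi(\Re(\scal{x,y}))=0$, simply expand
\begin{equation*}
\varphi(|x+y|^2)=\varphi(|x|^2)+2\varphi(\Re(\scal{x,y}))+\varphi(|y|^2)=\|x\|^2+0+\|y\|^2,
\end{equation*}
which is $(ii)$. For $(ii)\Rightarrow(i)$, I would combine the lower bound $\|x+y\|^2=\||x+y|^2\|\ge\varphi(|x+y|^2)=\|x\|^2+\|y\|^2$ (valid for any state $\varphi$ witnessing $(ii)$) with the upper bound already recorded in inequality \eqref{eq1} of Proposition \ref{p1}, namely $\|x+y\|^2\le\||x|^2+|y|^2\|\le\|x\|^2+\|y\|^2$, where the middle step uses $\Re(\scal{x,y})\le 0$ to ensure $|x+y|^2\le|x|^2+|y|^2$ in the $C^*$-algebra order.

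The main step is $(i)\Rightarrow(iii)$. Since $|x+y|^2$ is a positive element of $\mathcal{A}$, I can pick a state $\varphi$ with $\varphi(|x+y|^2)=\||x+y|^2\|=\|x+y\|^2$. Using $(i)$ this gives $\varphi(|x+y|^2)=\|x\|^2+\|y\|^2$, so after expansion
\begin{equation*}
\|x\|^2+\|y\|^2=\varphi(|x|^2)+2\varphi(\Re(\scal{x,y}))+\varphi(|y|^2).
\end{equation*}
The three summands on the right are individually constrained: $\varphi(|x|^2)\le\||x|^2\|=\|x\|^2$, likewise $\varphi(|y|^2)\le\|y\|^2$, while positivity of $\varphi$ together with $\Re(\scal{x,y})\le 0$ forces $\varphi(\Re(\scal{x,y}))\le 0$. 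Their sum is thus $\le\|x\|^2+\|y\|^2$, and the equality above compels each of the three inequalities to be an equality. This delivers $\varphi\in S_{|x|^2}(\mathcal{A})\cap S_{|y|^2}(\mathcal{A})$ together with $\varphi(\Re(\scal{x,y}))=0$, which is $(iii)$.

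The only delicate point is the trichotomy of bounds in the $(i)\Rightarrow(iii)$ step: one needs all three upper bounds simultaneously, and it is precisely the sign assumption $\Re(\scal{x,y})\le 0$ that supplies the third bound $\varphi(\Re(\scal{x,y}))\le 0$ and thereby permits the "equality in a sum of bounded quantities forces equality in each summand" argument. Without that hypothesis, $\varphi(\Re(\scal{x,y}))$ could absorb a deficit from the other two terms and the conclusion would fail.
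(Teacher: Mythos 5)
Your proposal is correct and uses essentially the same ingredients as the paper: the norming state for the positive element $|x+y|^2$, the chain of inequalities \eqref{eq1}, and the ``equality in a sum forces equality in each summand'' argument powered by $\Re(\scal{x,y})\le 0$. The only difference is organizational --- you close the cycle $(iii)\Rightarrow(ii)\Rightarrow(i)\Rightarrow(iii)$, whereas the paper proves $(i)\Leftrightarrow(ii)$ and $(ii)\Leftrightarrow(iii)$ separately, your $(i)\Rightarrow(iii)$ being the concatenation of the paper's $(i)\Rightarrow(ii)$ and $(ii)\Rightarrow(iii)$.
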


\begin{proof}
The implication $(i)\Rightarrow(ii)$ follows by \cite[Theorem 3.3.6]{Mur}.

$(ii)\Rightarrow(i)$. Conversely, if $\varphi$ is a state on $\mathcal{A}$ as in $(ii)$, then, by \eqref{eq1},
\begin{equation*}
\|x+y\|^2\le\|x\|^2+\|y\|^2=\varphi(|x+y|^2)\le\||x+y|^2\|=\|x+y\|^2.
\end{equation*}
Consequently, $(i)$ holds true.

$(iii)\Rightarrow(ii)$. Let $\varphi$ be a state on $\mathcal{A}$ such that
\begin{equation*}
\varphi(|x|^2)=\|x\|^2,\ \varphi(|y|^2)=\|y\|^2\text{ and }\varphi(\Re(\scal{x,y}))=0.
\end{equation*}
Then
\begin{equation*}
\varphi(|x+y|^2)=\varphi(|x|^2)+2\varphi(\Re(\scal{x,y}))+\varphi(|y|^2)=\|x\|^2+\|y\|^2.
\end{equation*}

$(ii)\Rightarrow(iii)$. Conversely, let $\varphi\in S(\mathcal{A})$ be a state which satisfies condition $(ii)$. Then
\begin{equation*}
\begin{split}
\|x\|^2+\|y\|^2 & =\varphi(|x|^2)+2\varphi(\Re(\scal{x,y}))+\varphi(|y|^2)\\
& \le\|x\|^2+2\varphi(\Re(\scal{x,y}))+\varphi(|y|^2)\\
& \le\|x\|^2+\varphi(|y|^2)\\
&\le\|x\|^2+\|y\|^2.
\end{split}
\end{equation*}
Thus $\varphi(|x|^2)=\|x\|^2$, $\varphi(|y|^2)=\|y\|^2$ and
$\varphi(\Re(\scal{x,y}))=0$. The statement $(iii)$ is proved.
\end{proof}

One can specialize this result for elements in the Hilbert $\mathcal{L}(\mathscr{H})$-module $\mathscr{E}=\mathcal{L}(\mathscr{H})$ (i.e., for bounded linear operators on $\mathscr{H}$). It is noted that for a bouned linear operator $A$ acting on a Hilbert space $\mathscr{H})$, the numerical range $V(A)$ is the closure of its classical numerical range $W(A):=\{\langle Ax,x\rangle: x\in\mathscr{H}, \|x\|=1\}$; see \cite{STA}.
\begin{corollary}
Let $A$ and $B$ be bounded linear operators acting on $\mathscr{H}$ such that $\Re(A^*B)\le 0$. The following conditions are equivalent:
\begin{itemize}
\item[$(i)$] $\|A+B\|^2=\|A\|^2+\|B\|^2$.
\item[$(ii)$] There exists a sequence $(\xi_n)_{n\ge 0}$ of unit vectors in $\mathscr{H}$ such that
\begin{equation*}
\|(A+B)\xi_n\|^2\xrightarrow{n\to\infty}\|A\|^2+\|B\|^2.
\end{equation*}
\item[$(iii)$] There exists a sequence $(\xi_n)_{n\ge 0}$ of unit vectors in $\mathscr{H}$ such that
\begin{equation*}
\|A\xi_n\|\xrightarrow{n\to\infty}\|A\|,\ \|B\xi_n\|\xrightarrow{n\to\infty}\|B\|\text{ and }\Re\scal{A\xi_n,B\xi_n}\xrightarrow{n\to\infty}0.
\end{equation*}
\end{itemize}
\end{corollary}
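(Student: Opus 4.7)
The strategy is to deduce this corollary from Theorem \ref{t6} applied to $\mathscr{E}=\mathcal{L}(\mathscr{H})$, regarded as a Hilbert module over the $C^*$-algebra $\mathcal{L}(\mathscr{H})$ itself via the inner product $\scal{A,B}=A^*B$ (so that $|A|^2=A^*A$), and then to translate the abstract state-theoretic conclusion into the vector-state language via the identity $V(T)=\overline{W(T)}$ recalled just before the statement. The hypothesis $\Re(A^*B)\le 0$ is exactly $\Re(\scal{A,B})\le 0$ in this module.

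For the equivalence $(i)\Leftrightarrow(ii)$, Theorem \ref{t6} tells us that $(i)$ is equivalent to $\|A\|^2+\|B\|^2\in V(|A+B|^2)$. Since $|A+B|^2=(A+B)^*(A+B)$ is a positive operator on $\mathscr{H}$, we have $V(|A+B|^2)=\overline{W(|A+B|^2)}$, so this membership is equivalent to the existence of a sequence of unit vectors $\xi_n\in\mathscr{H}$ with
\begin{equation*}
\scal{(A+B)^*(A+B)\xi_n,\xi_n}=\|(A+B)\xi_n\|^2\xrightarrow{n\to\infty}\|A\|^2+\|B\|^2,
\end{equation*}
which is exactly $(ii)$.

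For $(ii)\Rightarrow(iii)$, I would expand
\begin{equation*}
\|(A+B)\xi_n\|^2=\|A\xi_n\|^2+\|B\xi_n\|^2+2\scal{\Re(A^*B)\xi_n,\xi_n}
\end{equation*}
and use the three upper bounds $\|A\xi_n\|^2\le\|A\|^2$, $\|B\xi_n\|^2\le\|B\|^2$, and $\scal{\Re(A^*B)\xi_n,\xi_n}\le 0$, the last coming from the hypothesis $\Re(A^*B)\le 0$. The sum of these upper bounds is exactly $\|A\|^2+\|B\|^2$, so the three nonnegative defects $\|A\|^2-\|A\xi_n\|^2$, $\|B\|^2-\|B\xi_n\|^2$, and $-2\scal{\Re(A^*B)\xi_n,\xi_n}$ have vanishing total and therefore each tends to $0$. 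This yields all three limits in $(iii)$, noting that $\Re\scal{A\xi_n,B\xi_n}=\scal{\Re(A^*B)\xi_n,\xi_n}$. The reverse implication $(iii)\Rightarrow(ii)$ is immediate by plugging the three limits back into the same expansion.

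I do not expect a genuine obstacle: the argument is essentially bookkeeping around the specialization of Theorem \ref{t6}. The only delicate point is making sure that the states on $\mathcal{L}(\mathscr{H})$ used in Theorem \ref{t6} can be replaced by vector states, which is furnished precisely by the identification $V(T)=\overline{W(T)}$ for the positive operator $T=|A+B|^2$.
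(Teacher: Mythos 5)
Your proposal is correct and follows essentially the route the paper intends: the corollary is stated as a direct specialization of Theorem \ref{t6} to $\mathcal{L}(\mathscr{H})$ viewed as a Hilbert module over itself, with $V(|A+B|^2)=\overline{W(|A+B|^2)}$ converting the state condition into a vector-state (sequence) condition, and your squeeze argument for $(ii)\Leftrightarrow(iii)$ is the same three-term defect computation the paper uses in the proof of $(ii)\Rightarrow(iii)$ of Theorem \ref{t6}, carried out with the vector states $T\mapsto\scal{T\xi_n,\xi_n}$ in place of a single norm-attaining state. No gaps.
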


\begin{corollary}
Let $x$ and $y$ be two elements in a pre-Hilbert $C^*$-module $\mathscr{E}$ such that $\Re(\scal{x,y})\le 0$. The following conditions are equivalent:
\begin{itemize}
\item[$(i)$] $\|x+y\|^2=\|x\|^2+\|y\|^2.$
\item[$(ii)$] $\|\alpha x+\beta y\|^2=|\alpha|^2\|x\|^2+|\beta|^2\|y\|^2$ for certain (equivalently, for every) $\alpha,\beta\in\mathbb{C}$ with $\bar\alpha\beta>0$.
\end{itemize}
\end{corollary}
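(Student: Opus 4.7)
The plan is to reduce (ii) for any admissible pair $(\alpha,\beta)$ to the case $(\alpha,\beta)=(1,1)$ by applying Theorem \ref{t6} twice: once to the pair $(x,y)$ and once to the rescaled pair $(\alpha x,\beta y)$. The hinge is that the state-theoretic condition (iii) of Theorem \ref{t6} is invariant under such rescalings, which simultaneously gives the equivalence (i)$\Leftrightarrow$(ii) and the equivalence between "for some" and "for every" in (ii).

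First, fix $\alpha,\beta\in\mathbb{C}$ with $\bar\alpha\beta>0$; in particular $\alpha,\beta\ne 0$. Since $\bar\alpha\beta$ is a positive real number,
\[
\scal{\alpha x,\beta y}=\bar\alpha\beta\,\scal{x,y}\quad\text{and}\quad\Re\scal{\alpha x,\beta y}=\bar\alpha\beta\,\Re\scal{x,y}\le 0,
\]
so the standing hypothesis of Theorem \ref{t6} carries over from $(x,y)$ to $(\alpha x,\beta y)$. I note that this is the one place where the positivity of $\bar\alpha\beta$ (as opposed to just $\bar\alpha\beta\in\mathbb{C}^\times$) is essential, since otherwise $\Re(\lambda a)$ does not factor as $\lambda\,\Re(a)$.

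Next, from $|\alpha x|^2=|\alpha|^2|x|^2$ and $\|\alpha x\|^2=|\alpha|^2\|x\|^2$ it follows that a state $\varphi$ on $\mathcal{A}$ satisfies $\varphi(|\alpha x|^2)=\|\alpha x\|^2$ precisely when $\varphi(|x|^2)=\|x\|^2$, so $S_{|\alpha x|^2}(\mathcal{A})=S_{|x|^2}(\mathcal{A})$, and similarly $S_{|\beta y|^2}(\mathcal{A})=S_{|y|^2}(\mathcal{A})$. Moreover $\varphi(\Re\scal{\alpha x,\beta y})=\bar\alpha\beta\,\varphi(\Re\scal{x,y})$ vanishes if and only if $\varphi(\Re\scal{x,y})=0$. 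Consequently, condition (iii) of Theorem \ref{t6} for the pair $(x,y)$ coincides, verbatim, with condition (iii) of Theorem \ref{t6} for the pair $(\alpha x,\beta y)$.

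Applying Theorem \ref{t6} to each of the two pairs, this common state-theoretic condition is equivalent to (i) for $(x,y)$ and to $\|\alpha x+\beta y\|^2=\|\alpha x\|^2+\|\beta y\|^2=|\alpha|^2\|x\|^2+|\beta|^2\|y\|^2$, which is the equality in (ii) for the chosen $(\alpha,\beta)$. Since the characterizing condition does not depend on $(\alpha,\beta)$ at all, the same argument shows that if (ii) holds for some admissible pair then it holds for every admissible pair. There is no real obstacle here; the only subtle point is the one noted above, namely that $\Re$ commutes with scalar multiplication only for real scalars, which is exactly why the hypothesis $\bar\alpha\beta>0$ (rather than merely $\bar\alpha\beta\ne 0$) is imposed.
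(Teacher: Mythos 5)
Your proof is correct, and it follows the route the paper intends: the corollary is stated without proof immediately after Theorem \ref{t6}, and the natural derivation is exactly yours, namely that condition $(iii)$ of Theorem \ref{t6} is invariant under the rescaling $(x,y)\mapsto(\alpha x,\beta y)$ when $\bar\alpha\beta>0$ (which also preserves the hypothesis $\Re\scal{x,y}\le 0$), so all the Pythagoras identities in $(ii)$ are equivalent to the single state-theoretic condition and hence to $(i)$. Your remark on why $\bar\alpha\beta>0$ (rather than merely nonzero) is needed is also the right observation.
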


\begin{remark}
\textit{If $\Re(\scal{x,y})\le 0$, then, by the Pythagoras identity $\|x+y\|^2=\|x\|^2+\|y\|^2$, one can also obtain the following inequality:
\begin{equation*}
\|\alpha x+\beta y\|^2\ge|\alpha|^2\|x\|^2+|\beta|^2\|y\|^2
\end{equation*}
for every complex numbers $\alpha,\beta$ such that $\bar\alpha\beta$ is real.}

Indeed, if $\varphi$ is a state on $\mathcal{A}$ satisfying condition $(iii)$ of Theorem \ref{t6}, then, for every $\alpha,\beta\in\mathbb{C}$ with $\bar\alpha\beta\in\R$, it holds
\begin{equation*}
\begin{split}
\|\alpha x+\beta y\|^2& \ge\varphi(|\alpha x+\beta y|^2)\\
& =|\alpha|^2\varphi(|x|^2)+2\alpha\bar{\beta}\varphi(\Re(\scal{x,y}))+|\beta|^2\varphi(|y|^2)\\
& =|\alpha|^2\|x\|^2+|\beta|^2\|y\|^2.
\end{split}
\end{equation*}
\qed
\end{remark}

Under the stronger assumption $\Re(\scal{x,y})=0$ the Pythagoras identities associated to the pairs $(x,y)$ and, respectively, $(\alpha x,\beta y)$ (for $\bar{\alpha}\beta\in\R^*$) are actually equivalent.
\begin{corollary}\label{c36}
Let $x$ and $y$ be two elements in a pre-Hilbert $C^*$-module $\mathscr{E}$ such that $\Re(\scal{x,y})=0$. The following statements are equivalent:
\begin{enumerate}[$(i)$]
\item $\|x+y\|^2=\|x\|^2+\|y\|^2$.
\item $\|\alpha x+\beta y\|^2=|\alpha|^2\|x\|^2+|\beta|^2\|y\|^2$ for certain (equivalently, for every) non-null complex numbers $\alpha$ and $\beta$ with $\bar\alpha\beta\in\R$.
\item $\||x||y|\|=\|x\|\|y\|$.
\item $S_{|x+y|^2}(\mathcal{A})=S_{|x|^2}(\mathcal{A})\cap S_{|y|^2}(\mathcal{A})$.
\end{enumerate}
\end{corollary}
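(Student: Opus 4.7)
The approach I would take is built around one clean algebraic observation. Under the hypothesis $\Re(\scal{x,y})=0$, expanding $|x+y|^2=|x|^2+\scal{x,y}+\scal{y,x}+|y|^2=|x|^2+2\Re(\scal{x,y})+|y|^2$ shows that the cross terms cancel and $|x+y|^2=|x|^2+|y|^2$ as elements of $\mathcal{A}$. By the same computation $|x-y|^2=|x|^2+|y|^2$ also, so in particular $\|x+y\|^2=\|x-y\|^2=\||x|^2+|y|^2\|$. Every equivalence then reduces to results already established.

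The equivalence $(i)\Leftrightarrow(iii)$ is immediate from this key identity: $(i)$ is just $\||x|^2+|y|^2\|=\|x\|^2+\|y\|^2$, which by Proposition \ref{c5} is equivalent to $\||x||y|\|=\|x\|\|y\|$, i.e.\ $(iii)$. For $(i)\Leftrightarrow(ii)$, the direction $(ii)\Rightarrow(i)$ is trivial (set $\alpha=\beta=1$); for the converse, the preceding corollary applied to $(x,y)$ (whose hypothesis $\Re\scal{x,y}\le 0$ is satisfied) yields $(ii)$ for every $\alpha,\beta$ with $\bar\alpha\beta>0$. To cover the case $\bar\alpha\beta<0$, I would apply the same corollary to the pair $(x,-y)$: we have $\Re\scal{x,-y}=0$ and, from the key identity, $\|x+(-y)\|^2=\|x-y\|^2=\|x+y\|^2=\|x\|^2+\|-y\|^2$, so $(i)$ carries over to $(x,-y)$ and the corollary delivers $\|\alpha x+\beta'(-y)\|^2=|\alpha|^2\|x\|^2+|\beta'|^2\|y\|^2$ whenever $\bar\alpha\beta'>0$; setting $\beta=-\beta'$ this is exactly $(ii)$ for $\bar\alpha\beta<0$.

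Finally, for $(iii)\Leftrightarrow(iv)$, the key identity gives $\varphi(|x+y|^2)=\varphi(|x|^2)+\varphi(|y|^2)$ for every state $\varphi$. Assuming $(iii)$, hence $\|x+y\|^2=\|x\|^2+\|y\|^2$, a state $\varphi$ lies in $S_{|x+y|^2}(\mathcal{A})$ iff $\varphi(|x|^2)+\varphi(|y|^2)=\|x\|^2+\|y\|^2$; since $\varphi(|x|^2)\le\|x\|^2$ and $\varphi(|y|^2)\le\|y\|^2$ separately, this forces $\varphi\in S_{|x|^2}(\mathcal{A})\cap S_{|y|^2}(\mathcal{A})$, and the reverse inclusion is equally direct. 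Conversely, $(iv)$ forces $S_{|x|^2}(\mathcal{A})\cap S_{|y|^2}(\mathcal{A})\ne\emptyset$ (because $|x+y|^2$ is positive, so $S_{|x+y|^2}(\mathcal{A})\ne\emptyset$), and Proposition \ref{c5} then yields $(iii)$. The only step that is not essentially automatic is the sign-splitting in $(i)\Rightarrow(ii)$, where one must recognize the $y\mapsto -y$ symmetry of the hypothesis in order to apply the preceding corollary to both signs of $\bar\alpha\beta$; every remaining implication is an essentially one-line consequence of the cancellation $|x+y|^2=|x|^2+|y|^2$.
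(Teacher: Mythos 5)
Your overall strategy --- reduce everything to the cancellation $|x+y|^2=|x|^2+|y|^2$ and then invoke Proposition \ref{c5} --- is exactly the paper's, and your treatments of $(i)\Leftrightarrow(iii)$ and of $(iii)\Leftrightarrow(iv)$ coincide with the published proof: the paper likewise gets $(iv)\Rightarrow(i)$ from the nonemptiness of $S_{|x+y|^2}(\mathcal{A})$, and proves the inclusion $S_{|x+y|^2}(\mathcal{A})\subseteq S_{|x|^2}(\mathcal{A})\cap S_{|y|^2}(\mathcal{A})$ by the same two-step estimate.

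There is, however, one genuine gap: the implication ``$(ii)$ for \emph{certain} $\alpha,\beta$'' $\Rightarrow(i)$. You dispose of $(ii)\Rightarrow(i)$ by ``set $\alpha=\beta=1$'', but that specialization is available only under the \emph{for every} reading of $(ii)$; if the identity is assumed for just one pair $(\alpha_0,\beta_0)$ with $\bar\alpha_0\beta_0\in\R$, nothing in your argument returns you to $(i)$, so the equivalence of the ``certain'' and ``every'' versions asserted in the statement is left unproved. The repair is short and is in effect what the paper does: since $\Re\scal{\alpha_0 x,\beta_0 y}=\bar\alpha_0\beta_0\,\Re\scal{x,y}=0$, the scaled pair $(\alpha_0 x,\beta_0 y)$ again satisfies the hypothesis, so $\|\alpha_0 x+\beta_0 y\|^2=\|\alpha_0 x\|^2+\|\beta_0 y\|^2$ rewrites as $\||\alpha_0 x|^2+|\beta_0 y|^2\|=\|\alpha_0 x\|^2+\|\beta_0 y\|^2$, and Proposition \ref{c5} turns this into $\||\alpha_0 x||\beta_0 y|\|=\|\alpha_0 x\|\|\beta_0 y\|$; dividing by $|\alpha_0||\beta_0|$ gives $(iii)$, hence $(i)$. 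Note that this uniform application of Proposition \ref{c5} to the scaled pair also yields $(i)\Rightarrow(ii)$ for every $\alpha,\beta$ with $\bar\alpha\beta\in\R$ in one stroke, which makes your sign-splitting detour through the preceding corollary (applying it to $(x,y)$ and then to $(x,-y)$) correct but unnecessary.
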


\begin{proof}
Under the assumption $\Re(\scal{x,y})=0$ condition $(i)$ takes the form $\||x|^2+|y|^2\|=\|x\|^2+\|y\|^2$. The equivalences between $(i),\ (ii),$ and $(iii)$ follow by Proposition \ref{c5}. By the same corollary the statements are also equivalent with $S_{|x|^2}(\mathcal{A})\cap S_{|y|^2}(\mathcal{A})\ne\emptyset$. So $(iv)$ implies $(i)$. Finally, if
$\|x+y\|^2=\|x\|^2+\|y\|^2$ (condition $(i)$ holds true), then, for any $\varphi\in S_{|x+y|^2}(\mathcal{A})$,
\begin{equation*}
\begin{split}
\|x+y\|^2& =\varphi(|x+y|^2)\\
& =\varphi(|x|^2)+\varphi(|y|^2)\\
& \le\|x\|^2+\|y\|^2.
\end{split}
\end{equation*}
Hence, $\varphi\in S_{|x|^2}(\mathcal{A})\cap S_{|y|^2}(\mathcal{A})$. In other words,
$S_{|x+y|^2}(\mathcal{A})\subseteq S_{|x|^2}(\mathcal{A})\cap S_{|y|^2}(\mathcal{A})$. The converse inclusion is obvious, so the statement $(iv)$ is verified.
\end{proof}

\begin{corollary}\label{c16}
Let $x$ and $y$ be two elements in a pre-Hilbert $C^*$-module $\mathscr{E}$ such that $\scal{x,y}=0$. The following statements are equivalent:
\begin{enumerate}[$(i)$]
\item $\|x+y\|^2=\|x\|^2+\|y\|^2$.
\item $\|\alpha x+\beta y\|^2=|\alpha|^2\|x\|^2+|\beta|^2\|y\|^2$ for certain (equivalently, for every) nonzero complex numbers $\alpha$ and $\beta$.
\end{enumerate}
\end{corollary}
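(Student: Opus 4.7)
The plan is to observe that this corollary is a strengthening of Corollary \ref{c36}: the hypothesis $\scal{x,y}=0$ is strictly stronger than $\Re(\scal{x,y})=0$, and this extra strength is precisely what lets us drop the constraint $\bar\alpha\beta\in\R$ appearing in \ref{c36}. I would not try to reduce to \ref{c36} by a phase-change trick (that requires rotating $y$ as well and is clumsy); instead I would go back to Proposition \ref{c5}.

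The key computation is that, since $\scal{x,y}=0$, sesquilinearity gives $\scal{\alpha x,\beta y}=\bar\alpha\beta\scal{x,y}=0$ for every $\alpha,\beta\in\mathbb{C}$. Expanding $|\alpha x+\beta y|^2$ then collapses the cross terms identically in $\mathcal{A}$:
\begin{equation*}
|\alpha x+\beta y|^2=|\alpha|^2|x|^2+|\beta|^2|y|^2=|\alpha x|^2+|\beta y|^2.
\end{equation*}
Consequently, $\|\alpha x+\beta y\|^2=\||\alpha x|^2+|\beta y|^2\|$, and both (i) and (ii) become instances of the triangle equality for sums of positive elements of $\mathcal{A}$ of the form treated by Proposition \ref{c5}.

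Applying Proposition \ref{c5} to the pair $(\alpha x,\beta y)\in\mathscr{E}\times\mathscr{E}$, the identity $\||\alpha x|^2+|\beta y|^2\|=\|\alpha x\|^2+\|\beta y\|^2$ is equivalent to $S_{|\alpha x|^2}(\mathcal{A})\cap S_{|\beta y|^2}(\mathcal{A})\ne\emptyset$. The small observation I would record here is that for every $\alpha\ne 0$ and every positive $a\in\mathcal{A}$ one has $S_{|\alpha|^2 a}(\mathcal{A})=S_a(\mathcal{A})$, since scaling by the positive constant $|\alpha|^2$ multiplies both $\varphi(a)$ and $\|a\|$ by the same factor. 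Therefore $S_{|\alpha x|^2}(\mathcal{A})\cap S_{|\beta y|^2}(\mathcal{A})=S_{|x|^2}(\mathcal{A})\cap S_{|y|^2}(\mathcal{A})$, a set independent of the choice of nonzero $\alpha,\beta$.

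Assembling these steps: (i) (which is the case $\alpha=\beta=1$), (ii) for some nonzero $\alpha,\beta$, and (ii) for every nonzero $\alpha,\beta$ are all equivalent to the single condition $S_{|x|^2}(\mathcal{A})\cap S_{|y|^2}(\mathcal{A})\ne\emptyset$, which closes the chain and yields also the internal ``certain $\Leftrightarrow$ every'' equivalence inside (ii). I do not expect a real obstacle: the hypothesis $\scal{x,y}=0$ eliminates the cross terms entirely, so the whole argument rests on Proposition \ref{c5} together with the elementary scaling invariance of the set of norming states on a positive element.
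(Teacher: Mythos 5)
Your proof is correct. The paper states this corollary without an explicit proof, and your argument --- the vanishing of the cross terms in $|\alpha x+\beta y|^2$ together with the scale-invariance $S_{|\alpha x|^2}(\mathcal{A})\cap S_{|\beta y|^2}(\mathcal{A})=S_{|x|^2}(\mathcal{A})\cap S_{|y|^2}(\mathcal{A})$ fed into Proposition \ref{c5} --- is precisely the intended reduction, paralleling the proof of Corollary \ref{c36}.
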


\section{Pythagoras Orthogonality}

It is our aim in this section to investigate the Pythagoras orthogonality in the context of Hilbert $C^*$-modules. We list some properties of this notion, as follows:
\begin{enumerate}[$(a)$]
\item If $x$ and $y$ are linearly dependent, then $x\perp_P y$ if and only if $x=0$ or $y=0$. Due to this simple remark one may suppose, when trying to describe the concept of Pythagoras orthogonality, that the two vectors $x$ and $y$ are linearly independent. If not stated otherwise, we will make this assumption for the rest of the paper.
\item In inner product spaces, $x\perp_P y$ if and only if $\scal{x,y}=0$.
\item In pre-Hilbert $C^*$-modules, if $\scal{x,y}=0$, then $x\perp_P y$ if and only if $\|x+\alpha y\|^2=\|x\|^2+|\alpha|^2\|y\|^2$ for a certain non-null $\alpha\in\mathbb{C}$ if and only if $\||x||y|\|=\|x\|\|y\|$ (see Corollaries \ref{c36} and \ref{c16}).
\item In normed $*$-algebras, $x\perp_P y$ if and only if $x^*\perp_P y^*$.
\item $x\perp_P x$ if and only if $x=0$ (nondegenerate).
\item $x\perp_P y$ if and only if $y\perp_P x$ (symmetric).
\item If $x\perp_P y$, then $(\alpha x)\perp_P(\beta y),\ \alpha,\beta\in\mathbb{C}$ (homogeneous).
\item If $x\perp_P y$, then $x\perp_R y$, $x\perp_B y$ and $y\perp_B x$.
\item If $x\perp_P y$,
then $x$ and $y$ satisfy the parallelogram law.
\item If $x\perp_R y$, then $x\perp_P y$ if and only if $x$ and $y$ satisfy the parallelogram law.
\end{enumerate}

We describe, in a few examples, for elements $x,y$ in a pre-Hilbert $C^*$-module, the relationship between the Birkhoff--James, Roberts and Pythagoras orthogonality, the parallelogram law and the equality $\scal{x,y}=0$.

\begin{example}
Let us consider the $C^*$-algebra $\mathcal{L}(\mathscr{H})$ of bounded linear operators on a separable Hilbert space $\mathscr{H}$ ($\mathcal{L}(\mathscr{H})$ is regarded as a Hilbert module over itself). For a given orthonormal basis $(e_n)_{n\ge 1}$ in $\mathscr{H}$ we define $A,B\in\mathcal{L}(\mathscr{H})$ by
\begin{equation*}
Ae_n=\begin{cases}
\frac{1}{2}e_1& \text{if $n=1$},\\
\frac{1}{\sqrt{2^k}}e_2& \text{if $n=2k,\ k\ge 1$},\\
0& \text{otherwise}
\end{cases}
\text{ and }
Be_n=\begin{cases}
\frac{1}{2}e_1& \text{if $n=1$},\\
\frac{1}{\sqrt{2^k}}e_2& \text{if $n=2k+1,\ k\ge 1$},\\
0& \text{otherwise.}
\end{cases}
\end{equation*}
Then $\|A\|=\|B\|=1$ and, for any $\lambda\in \mathbb{C}$, we have
\begin{equation*}
\|A+\lambda B\|^2=1+|\lambda|^2=\|A\|^2+|\lambda|^2\|B\|^2.
\end{equation*}
Hence, \textit{$A$ and $B$ are orthogonal in the Pythagoras sense}. Moreover, since $e_1\in\ran A\cap\ran B$, $\scal{A,B}\neq 0$ ($\ran A$ denotes the range of $A$).\qed
\end{example}

\begin{example} Suppose that $\mathcal{A}$ is the $C^*$-algebra $\mathcal{C}[0,1]$ of all complex valued continuous functions on the closed interval $[0,1]$ (considered as a Hilbert $C^*$-module over itself). Let $f,g\in\mathcal{A}$ be defined by
\begin{equation*}
f(x)=\begin{cases}
\frac{1}{2}-x& \text{if $0\leq x\leq \frac{1}{2}$},\\
0& \text{if $\frac{1}{2}<x\leq 1$}
\end{cases}\text{ and }
g(x)=\begin{cases}
0& \text{if $0\leq x\leq \frac{1}{2}$},\\
x-\frac{1}{2}& \text{if $\frac{1}{2}< x\leq 1$}.
\end{cases}
\end{equation*}
Then $\|f\|=\|g\|=\frac{1}{2}$ and, for $\lambda\in\mathbb{C}$,
\begin{equation*}
\|f+\lambda g\|=\max\biggl\{\frac{1}{2},\frac{|\lambda|}{2}\biggr\}.
\end{equation*}
It follows that $f\perp_B g,\ g\perp_B f$ and $f\perp_R g$. Although $\scal{f,g}=0$, \textit{$f$ and $g$ are not orthogonal in the Pythagoras sense}, since $\|f+\lambda g\|^2=\|f\|^2+|\lambda|^2\|g\|^2$ if and only if $\lambda=0$. Equivalently, as $f\perp_R g$, \textit{$f$ and $g$ do not satisfy the parallelogram law}, either.\qed
\end{example}

\begin{example}\label{e2}
Let $S$ and $T$ be bounded linear operators on $\mathscr{H}$. If
\begin{equation*}
A=\begin{pmatrix} S&0\\0&0\end{pmatrix}\text{ and }B=\begin{pmatrix} 0&T\\0&0\end{pmatrix}
\end{equation*}
are elements of the $C^*$-algebra $\mathcal{A}=\mathcal{L}(\mathscr{H}\oplus\mathscr{H})$, then $\|A\|=\|S\|$, $\|B\|=\|T\|$ and
\begin{equation*}
\|A+\lambda B\|^2=\|SS^*+|\lambda|^2TT^*\|,\quad\lambda\in\mathbb{C}.
\end{equation*}
One can immediately verify that $A\perp_B B,\ B\perp_B A$ and $A\perp_R B$. Also, by Corollary \ref{c6}, \textit{$A$ and $B$ are orthogonal in the Pythagoras sense if and only if $\|S^*T\|=\|S\|\|T\|$}. In particular, this orthogonality condition is satisfied, for example, when $S$ is a scalar multiple of a coisometric operator or of an orthogonal projection $P$ with $\ran(P)\supseteq\ran(T)$.

In addition, one can immediately verify that, \textit{$A$ and $B$ satisfy the parallelogram law if and only if they are orthogonal in the Pythagoras sense}. By the other hand, \textit{$\scal{A,B}=A^*B=0$ if and only if $\ran S\perp\ran T$}.\qed
\end{example}

\begin{example}\label{e3}
For given $X\in\mathcal{L}(\mathscr{H})\setminus\{0\}$ and complex numbers $a,b,c,d$ we define (on the Hilbert space $\mathscr{H}\oplus\mathscr{H}$) the matrix operator
\begin{equation*}
M_X(a,b,c,d)=\begin{pmatrix} aI&bX\\cX^*&dI\end{pmatrix}.
\end{equation*}
It's norm has been computed by Feldman, Krupnik, and Markus in \cite[Lemma 1.6]{FKM} as
\begin{equation*}
\|M_X(a,b,c,d)\|=\frac{\sqrt{r-s}+\sqrt{r+s}}{2},
\end{equation*}
where $r=|a|^2+|d|^2+(|b|^2+|c|^2)\|X\|^2$ and $s=2|ad-bc\|X\||$.

Let $A=M_X(a,0,0,d)$ and $B=M_X(0,b,c,0)$. After direct calculations one may check that, regardless of the values of $a,b,c$ and $d$, $A\perp_B B$ and $A\perp_R B$. By the other hand, $B\perp_B A$ if and only if $bc=0$. We can also verify that \textit{the following conditions are equivalent:
\begin{itemize}
\item[$(i)$] $A$ and $B$ are orthogonal in the Pythagoras sense$;$
\item[$(ii)$] $A$ and $B$ satisfy the parallelogram law$;$
\item[$(iii)$] $ad=bc=0$.
\end{itemize}}
Finally, \textit{$\scal{A,B}=0$ if and only if $ab=cd=0$}.
\qed
\end{example}

\begin{example}\label{e4}
We study the concepts of orthogonality presented above for rank one operators, i.e., for operators of the form
\begin{equation*}
\mathscr{H}\ni z\mapsto x\otimes y(z):=\scal{z,y}x\in\mathscr{H},
\end{equation*}
where $x$ and $y$ are given vectors in $\mathscr{H}$.

Let $x,y,u,v\in\mathscr{H}$, and consider the operators $A=x\otimes y$ and $B=u\otimes v$. Then $\|A\|=\|x\|\|y\|$ and $\|B\|=\|u\|\|v\|$. The squared singular values of $A+\lambda B$ ($\lambda\in\mathbb{C}$) are the roots of the equation (in the unknown $t$)
\begin{multline*}
(t-\|x\|^2\|y\|^2-\bar{\lambda}\scal{x,u}\scal{v,y})(t-|\lambda|^2\|u\|^2\|v\|^2-\lambda\scal{u,x}\scal{y,v})\\
=(\lambda\scal{u,x}\|y\|^2+|\lambda|^2\scal{v,y}\|u\|^2)(\bar{\lambda}\scal{x,u}\|v\|^2+\scal{y,v}\|x\|^2).
\end{multline*}
Hence,
\begin{equation*}
\begin{split}
\|A+\lambda B\|^2& =\frac{1}{2}\Biggl(\|x\|^2\|y\|^2+|\lambda|^2\|u\|^2\|v\|^2+\lambda\scal{u,x}\scal{y,v}+\bar{\lambda}\scal{x,u}\scal{v,y}\\
& \qquad +\sqrt{
\begin{aligned}
(&\|x\|^2\|y\|^2+|\lambda|^2\|u\|^2\|v\|^2+\lambda\scal{u,x}\scal{y,v}+\bar{\lambda}\scal{x,u}\scal{v,y})^2\\
&\ -4|\lambda|^2\|x\|^2\|y\|^2\|u\|^2\|v\|^2-4|\lambda|^2|\scal{u,x}\scal{y,v}|^2\\
&\ +4|\lambda|^2\|x\|^2\|u\|^2|\scal{y,v}|^2+4|\lambda|^2\|y\|^2\|v\|^2|\scal{u,x}|^2
\end{aligned}
}
\Biggr).
\end{split}
\end{equation*}
After some computations one can observe that the following conditions are equivalent:
\begin{enumerate}[$(i)$]
\item $A\perp_B B$.
\item $B\perp_B A$.
\item $A\perp_R B$.
\item $\scal{x,u}=0$ or $\scal{y,v}=0$.
\end{enumerate}

Also, \textit{$A$ and $B$ are orthogonal in the Pythagoras sense if and only if one of the following conditions holds true:
\begin{itemize}
\item[$(a)$] $\{x,u\}$ are linearly dependent and $\scal{y,v}=0.$
\item[$(b)$] $\{y,v\}$ are linearly dependent and $\scal{x,u}=0$.
\end{itemize}}

Moreover, \textit{$A$ and $B$ satisfy the parallelogram law if and only if $\{x,u\}$ or $\{y,v\}$ are linearly dependent}. Finally, \textit{$\scal{A,B}=0$ if and only if $\scal{x,u}=0$}.\qed
\end{example}

Let $\mathcal{A}$ be a $C^*$-algebra. For an element $a\in\mathcal{A}$ we denote by $m(a)$ the \textit{minimum modulus} of $|a|$, i.e.,
\begin{equation*}
m(a):=\inf\{\varphi(|a|): \varphi\in S(\mathcal{A})\}.
\end{equation*}
Evidently, for a positive element $a\in\mathcal{A}$, $m(a)>0$ if and only if $a$ is invertible.
The following theorem, which relates Birkhoff--James orthogonality with a weaker version of Pythagoras orthogonality, has been formulated in the context of Hilbert spaces by Barraa and Boumazgour in \cite[Theorem 3]{BB} (see also \cite[Theorem 2.9]{ZM}).
\begin{theorem}\label{t10}
Let $x$ and $y$ be elements of a pre-Hilbert $\mathcal{A}$-module. The following conditions are equivalent:
\begin{enumerate}[$(i)$]
\item There exists $\varphi\in S_{|x|^2}(\mathcal{A})$ such that $\varphi(\scal{x,y})=0$.
\item $x\perp_B y$.
\item $\|x+\lambda y\|^2\ge\|x\|^2+|\lambda|^2m(|y|^2),\ \lambda\in\mathbb{C}$.
\end{enumerate}
\end{theorem}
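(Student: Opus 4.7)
The plan is to prove the cycle $(i) \Rightarrow (iii) \Rightarrow (ii) \Rightarrow (i)$. For $(i) \Rightarrow (iii)$, I would fix a state $\varphi \in S_{|x|^2}(\mathcal{A})$ with $\varphi(\scal{x,y}) = 0$; since $\scal{y,x} = \scal{x,y}^*$, also $\varphi(\scal{y,x}) = \overline{\varphi(\scal{x,y})} = 0$. Expanding $|x+\lambda y|^2 = |x|^2 + \bar\lambda\scal{y,x} + \lambda\scal{x,y} + |\lambda|^2|y|^2$, applying $\varphi$, and using $\varphi(|y|^2) \ge m(|y|^2)$ (which is valid since $|y|^2$ is positive) gives
\begin{equation*}
\|x+\lambda y\|^2 \ge \varphi(|x+\lambda y|^2) = \|x\|^2 + |\lambda|^2 \varphi(|y|^2) \ge \|x\|^2 + |\lambda|^2 m(|y|^2).
\end{equation*}
The implication $(iii) \Rightarrow (ii)$ is immediate from $m(|y|^2) \ge 0$.

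The substantive step is $(ii) \Rightarrow (i)$. My plan is to show that the compact convex set
\begin{equation*}
K := \{\varphi(\scal{x,y}) : \varphi \in S_{|x|^2}(\mathcal{A})\} \subseteq \mathbb{C}
\end{equation*}
contains the origin, via a separation argument in the plane. As an intermediate claim, I will show that for every $\nu \in \mathbb{C}$ there exists $\varphi_\nu \in S_{|x|^2}(\mathcal{A})$ with $\Re(\nu\varphi_\nu(\scal{x,y})) \ge 0$. To prove the claim, specialize $(ii)$ to $\lambda = t\nu$ with $t > 0$ and expand to obtain
\begin{equation*}
\||x|^2 + 2t\Re(\nu\scal{x,y}) + t^2|\nu|^2|y|^2\| \ge \|x\|^2.
\end{equation*}
Since the left-hand side is a positive element of $\mathcal{A}$, its norm is attained by some state $\varphi_t$. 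Applying $\varphi_t$ to this positive element and using $\varphi_t(|x|^2) \le \|x\|^2$ together with $\varphi_t(|y|^2) \le \|y\|^2$ yields after rearrangement $\varphi_t(\Re(\nu\scal{x,y})) \ge -\tfrac{t}{2}|\nu|^2\|y\|^2$. By Banach--Alaoglu a subnet of $(\varphi_{t_n})$ with $t_n \to 0^+$ converges weak-$*$ to a positive functional $\varphi_\nu$; the identity $\varphi_{t_n}(|x+t_n\nu y|^2) = \||x+t_n\nu y|^2\|$ combined with $\||x+t_n\nu y|^2\| \to \|x\|^2$ and the vanishing of the $O(t_n)$ and $O(t_n^2)$ corrections forces $\varphi_\nu(|x|^2) = \|x\|^2$, so $\|\varphi_\nu\| = 1$ and thus $\varphi_\nu \in S_{|x|^2}(\mathcal{A})$. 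Passing to the limit in the displayed inequality gives $\Re(\nu\varphi_\nu(\scal{x,y})) \ge 0$.

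With the claim in hand, the argument finishes quickly: if $0 \notin K$, Hahn--Banach in $\mathbb{C} \cong \mathbb{R}^2$ produces $\mu_0 \in \mathbb{C} \setminus \{0\}$ and $\varepsilon > 0$ with $\Re(\bar\mu_0 z) \ge \varepsilon$ for every $z \in K$; applying the claim with $\nu := -\bar\mu_0$ produces $\varphi_\nu \in S_{|x|^2}(\mathcal{A})$ with $\Re(\nu\varphi_\nu(\scal{x,y})) \ge 0$, while $\varphi_\nu(\scal{x,y}) \in K$ forces $\Re(\nu\varphi_\nu(\scal{x,y})) \le -\varepsilon < 0$, a contradiction. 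The hardest part, as I see it, is the weak-$*$ limiting step in the middle paragraph: one must simultaneously ensure that the cluster point $\varphi_\nu$ really attains the norm on $|x|^2$ (so as to be a state of norm one, not merely a positive functional of norm at most one in the nonunital case) and that the one-sided estimate on $\Re(\nu\scal{x,y})$ survives the limit. Everything else is either a direct computation or a standard separation argument in the plane.
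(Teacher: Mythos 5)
Your proposal is correct, and for the two easy implications it coincides with the paper: $(i)\Rightarrow(iii)$ is the same expansion of $\varphi(|x+\lambda y|^2)$ followed by $\varphi(|y|^2)\ge m(|y|^2)$, and $(iii)\Rightarrow(ii)$ is immediate in both. The genuine difference is in the hard direction $(ii)\Rightarrow(i)$: the paper does not prove it at all, but cites the equivalence $(i)\Leftrightarrow(ii)$ from Aramba\v{s}i\'c--Raji\'c \cite[Theorem 2.7]{AR1} (see also \cite[Theorem 4.4]{BG}), whereas you supply a complete self-contained argument. Your argument is sound: the norm-attaining states $\varphi_t$ for the positive elements $|x+t\nu y|^2$ exist by \cite[Theorem 3.3.6]{Mur}; the estimate $\varphi_t(\Re(\nu\scal{x,y}))\ge-\tfrac{t}{2}|\nu|^2\|y\|^2$ follows exactly as you compute from $\varphi_t(|x+t\nu y|^2)\ge\|x\|^2$; the weak-$*$ cluster point is a genuine element of $S_{|x|^2}(\mathcal{A})$ because $\varphi_{t_n}(|x|^2)=\||x+t_n\nu y|^2\|-O(t_n)\to\|x\|^2$, which forces norm one even when $\mathcal{A}$ is nonunital (you are right that this is the delicate point); and the final separation argument works because $S_{|x|^2}(\mathcal{A})$ is convex (as the paper notes for positive elements) and weak-$*$ compact (any positive functional of norm at most one attaining $\||x|^2\|>0$ on $|x|^2$ has norm exactly one), so $K$ is a compact convex subset of the plane. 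What each route buys: the paper's citation keeps the proof to a few lines and delegates the real content; your version reconstructs that content in the same variational spirit as the cited sources, at the cost of the compactness and norm-attainment bookkeeping, and has the advantage of making the theorem independent of external references. No gaps.
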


\begin{proof}
The equivalence $(i)\Leftrightarrow(ii)$ has been obtained by Aramba\v{s}i\'c and Raji\'c in \cite[Theorem 2.7]{AR1} (see also \cite[Theorem 4.4]{BG}).

Clearly, $(ii)$ is a consequence of $(iii)$. Conversely, if $\varphi$ is a state of $\mathcal{A}$ which verifies $\varphi(|x|^2)=\|x\|^2$ and $\varphi(\scal{x,y})=0$ (by $(i)$), then, for every $\lambda\in\mathbb{C}$,
\begin{equation*}
\begin{split}
\|x+\lambda y\|^2& \ge\varphi(|x+\lambda y|^2)\\
& =\varphi(|x|^2)+\bar{\lambda}\varphi(\scal{x,y})+\lambda\varphi(\scal{y,x})+|\lambda|^2\varphi({|y|^2})\\
& =\|x\|^2+|\lambda|^2\varphi({|y|^2})\\
& \ge\|x\|^2+|\lambda|^2m(|y|^2).
\end{split}
\end{equation*}
Condition $(iii)$ is proved.
\end{proof}

It was noted by James \cite[Corollary 2.2]{Jam} that, for any two elements $x$ and $y$ of a normed linear space $\mathscr{X}$ there exists a number $\alpha$ such that $y\perp_B(x+\alpha y)$. Such a condition is not valid, in full generality, for Pythagoras orthogonality. However, a weaker version might still be formulated. Its operator version can be found in \cite[Corollary 4]{BB} (see also \cite[Corollary 2.11]{ZM}).
\begin{corollary}
Let $x$ and $y$ be elements of a pre-Hilbert module over $\mathcal{A}$ such that $m(|y|^2)>0$. Then there exists a unique $\alpha_0\in\mathbb{C}$ such that
\begin{equation}\label{eq2}
\|x+\alpha_0y+\lambda y\|^2\ge\|x+\alpha_0y\|^2+|\lambda|^2m(|y|^2),\quad\lambda\in\mathbb{C}.
\end{equation}
Moreover, one can find $\varphi\in S_{|x+\alpha y|^2}(\mathcal{A})$ such that $\varphi(\scal{x+\alpha y,y})=0$ if and only if $\alpha=\alpha_0$.
\end{corollary}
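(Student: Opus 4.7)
The plan is to reduce inequality \eqref{eq2} to the Birkhoff--James orthogonality $(x+\alpha_0 y)\perp_B y$ via Theorem \ref{t10}, then obtain $\alpha_0$ as a minimizer of the function $\alpha\mapsto\|x+\alpha y\|$ on $\mathbb{C}$, and use the strict Pythagoras-type estimate in \eqref{eq2} itself to extract uniqueness.

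For existence, I would first note that since $m(|y|^2)>0$ we have $\|y\|>0$, so the triangle inequality yields $\|x+\alpha y\|\ge|\alpha|\|y\|-\|x\|\to\infty$ as $|\alpha|\to\infty$. Combined with the convexity and continuity of $\alpha\mapsto\|x+\alpha y\|$ on $\mathbb{C}$, this forces the infimum to be attained at some $\alpha_0\in\mathbb{C}$. For this $\alpha_0$, the identity $\|x+\alpha_0 y+\lambda y\|=\|x+(\alpha_0+\lambda)y\|\ge\|x+\alpha_0 y\|$, valid for every $\lambda\in\mathbb{C}$, expresses exactly $(x+\alpha_0 y)\perp_B y$. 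Applying the implication $(ii)\Rightarrow(iii)$ of Theorem \ref{t10} to the pair $(x+\alpha_0 y,y)$ then delivers \eqref{eq2}.

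For uniqueness, suppose $\alpha_0$ and $\alpha_1$ both satisfy \eqref{eq2}. Substituting $\lambda=\alpha_1-\alpha_0$ into the inequality for $\alpha_0$ and $\lambda=\alpha_0-\alpha_1$ into the inequality for $\alpha_1$ yields
\begin{equation*}
\|x+\alpha_1 y\|^2\ge\|x+\alpha_0 y\|^2+|\alpha_1-\alpha_0|^2\, m(|y|^2)
\end{equation*}
together with its symmetric counterpart; summing these forces $|\alpha_0-\alpha_1|^2\, m(|y|^2)\le 0$, and $m(|y|^2)>0$ gives $\alpha_0=\alpha_1$. The moreover assertion is then immediate from the equivalence $(i)\Leftrightarrow(iii)$ of Theorem \ref{t10} applied to the pair $(x+\alpha y,y)$: the existence of such $\varphi$ corresponds exactly to the inequality \eqref{eq2} with $\alpha$ in place of $\alpha_0$, and by the uniqueness just established this occurs precisely when $\alpha=\alpha_0$.

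I do not foresee serious technical obstacles: the argument reduces to a standard convex-coercive minimization once Theorem \ref{t10} is invoked. The hypothesis $m(|y|^2)>0$ plays a crucial role in two places, namely coercivity (via $\|y\|>0$) and the uniqueness step, which would otherwise collapse into mere Birkhoff--James orthogonality and admit multiple solutions in general.
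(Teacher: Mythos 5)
Your proposal is correct and follows essentially the same route as the paper: attain the minimum of $\alpha\mapsto\|x+\alpha y\|$ by coercivity and continuity, translate the minimality into Birkhoff--James orthogonality $(x+\alpha_0y)\perp_B y$, upgrade it to \eqref{eq2} via Theorem \ref{t10}, and derive uniqueness by substituting $\lambda=\alpha_1-\alpha_0$ (the paper uses the minimality of $\alpha_0$ at this step where you symmetrize and sum, a negligible variation). The final equivalence via Theorem \ref{t10} $(i)\Leftrightarrow(iii)$ is also exactly the paper's argument.
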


\begin{proof}
We firstly observe that, since $\lim_{|\alpha|\to\infty}\|x+\alpha y\|=\infty$,
\begin{equation*}
\inf\{\|x+\alpha y\|:\alpha\in\mathbb{C}\}=\inf\{\|x+\alpha y\|:|\alpha|\le\delta\}
\end{equation*}
for a certain $\delta>0$. In addition, as the map $\alpha\mapsto\|x+\alpha y\|$ is continuous on the compact set $\{|\alpha|\le\delta\}$, it attains its minimum at some point $\alpha_0\in\mathbb{C}$, that is, $\|x+\alpha_0y+\lambda y\|\ge\|x+\alpha_0y\|$ for every $\lambda\in\mathbb{C}$. Formula \eqref{eq2} then follows by Theorem \ref{t10} $(iii)$. If, for some $\alpha_1\in \mathbb{C}$,
\begin{equation*}
\|x+\alpha_1y+\lambda y\|^2\ge\|x+\alpha_1y\|^2+|\lambda|^2m(|y|^2),\quad\lambda\in\mathbb{C}
\end{equation*}
then, by taking $\lambda=\alpha_0-\alpha_1$, we obtain
\begin{equation*}
m(|y|^2)|\alpha_1-\alpha_0|^2\le\|x+\alpha_0y\|^2-\|x+\alpha_1y\|^2\le 0,
\end{equation*}
so, $\alpha_1=\alpha_0$. The final statement is a consequence, in view of the uniqueness of $\alpha_0$, of Theorem \ref{t10} ($(i)\Leftrightarrow(iii)$).
\end{proof}

As seen earlier, the Pythagoras orthogonality implies both the parallelogram law and Birkhoff--James orthogonality. In certain particular situations, the converse is also true.
\begin{theorem}
Let $\mathcal{A}$ be a unital $C^*$-algebra with unit $e$. If $x$ and $y$ are elements in a pre-Hilbert $\mathcal{A}$-module such that $|y|^2=\alpha e$ ($\alpha>0$ is given), then the following conditions are equivalent:
\begin{enumerate}[$(i)$]
\item $x$ and $y$ are orthogonal in the Pythagoras sense.
\item $x$ and $y$ satisfy the parallelogram law and are Birkhoff--James orthogonal.
\end{enumerate}
\end{theorem}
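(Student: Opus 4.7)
The implication $(i)\Rightarrow(ii)$ is already contained in properties $(h)$ and $(i)$ listed at the start of the section, so the real content is $(ii)\Rightarrow(i)$. My plan is to squeeze the quantity $\|x+\lambda y\|^2$ between two matching bounds, one obtained from Birkhoff--James orthogonality via Theorem \ref{t10}, and the other obtained by applying the same lower bound to the $x-\lambda y$ term in the parallelogram identity.

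First I would unpack the hypothesis $|y|^2=\alpha e$. Since $\|y\|^2=\||y|^2\|=\|\alpha e\|=\alpha$, and since $\varphi(\alpha e)=\alpha$ for every state $\varphi$ on $\mathcal{A}$, one gets $m(|y|^2)=\alpha=\|y\|^2$. Thus the conclusion of Theorem \ref{t10}\,$(iii)$, applied to the Birkhoff--James orthogonality in $(ii)$, sharpens to
\begin{equation*}
\|x+\lambda y\|^2\ge\|x\|^2+|\lambda|^2\|y\|^2,\qquad\lambda\in\mathbb{C}. \tag{$\ast$}
\end{equation*}
In particular $(\ast)$ also applies with $\lambda$ replaced by $-\lambda$, giving $\|x-\lambda y\|^2\ge\|x\|^2+|\lambda|^2\|y\|^2$.

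Next I would invoke the parallelogram law assumed in $(ii)$ to write
\begin{equation*}
\|x+\lambda y\|^2=2\|x\|^2+2|\lambda|^2\|y\|^2-\|x-\lambda y\|^2,
\end{equation*}
and substitute the lower bound on $\|x-\lambda y\|^2$ just obtained, which yields the reverse inequality
\begin{equation*}
\|x+\lambda y\|^2\le\|x\|^2+|\lambda|^2\|y\|^2,\qquad\lambda\in\mathbb{C}.
\end{equation*}
Combining this with $(\ast)$ gives the Pythagoras identity for every $\lambda\in\mathbb{C}$, i.e.\ $x\perp_P y$.

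Honestly, no single step here looks like a serious obstacle; the only delicate point is noticing that the assumption $|y|^2=\alpha e$ forces $m(|y|^2)$ to attain the value $\|y\|^2$, which is precisely what upgrades the generally weaker conclusion of Theorem \ref{t10} into the full Pythagoras lower bound $(\ast)$. Once that is in hand, the parallelogram law automatically supplies the matching upper bound, so the proof is essentially a two-line squeeze.
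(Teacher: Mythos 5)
Your proof is correct and follows essentially the same route as the paper: both use the identity $m(|y|^2)=\alpha=\|y\|^2$ to upgrade Theorem \ref{t10}\,$(iii)$ to the lower bound $\|x+\lambda y\|^2\ge\|x\|^2+|\lambda|^2\|y\|^2$ and then let the parallelogram law force equality. You merely spell out the final squeeze (applying the bound to $x-\lambda y$ as well) that the paper leaves implicit in the phrase ``the inequalities above become equalities.''
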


\begin{proof}
The direct implication is obvious. Conversely, if $(ii)$ holds true, then, by Theorem \ref{t10} $(iii)$,
\begin{equation*}
\begin{split}
\|x+\lambda y\|^2& \ge\|x\|^2+|\lambda|^2m(|y|^2)\\
& =\|x\|^2+\alpha|\lambda|^2\\
& =\|x\|^2+|\lambda|^2\|y\|^2,
\quad\lambda\in\mathbb{C}.
\end{split}
\end{equation*}
In view of the parallelogram law, the inequalities above become equalities. Hence $x\perp_P y$, as required.
\end{proof}

The operator version of Theorem \ref{t10} ($(i)\Leftrightarrow(ii)$) has been obtained by B. Magajna in \cite[Lemma 2.2]{Mag} (see also \cite[Remark 3.1]{BS}). It states that two bounded linear operators $A$ and $B$ on $\mathscr{H}$ are orthogonal in the Birkhoff--James sense
if and only if there exists a sequence $(\xi_n)_{n\ge 0}$ of unit vectors in $\mathscr{H}$ such that
\begin{equation*}
\|A\xi_n\|\xrightarrow{n\to\infty}\|A\|\text{ and }\scal{A\xi_n,B\xi_n}\xrightarrow{n\to\infty}0.
\end{equation*}

For Pythagoras orthogonality (a concept which is stronger than Birkhoff--James orthogonality) we must include certain additional conditions. One of the main tools in our developments is the following expression of the $\min_{\lambda\in\mathbb{C}}\|A+\lambda B\|$.
\begin{theorem}[{\cite[Proposition 2.1]{AR1}}]\label{Thraj}
Let $A$ and $B$ be bounded linear operators on $\mathscr{H}$. Then
\begin{equation*}
\min_{\lambda\in\mathbb{C}}\|A+\lambda B\|^2=\sup_{\|\xi\|=1}M_{A,B}(\xi),
\end{equation*}
where
\begin{equation*}
M_{A,B}(\xi)=\begin{cases}
\|A\xi\|^2-\frac{|\langle A\xi, B\xi\rangle|^2}{\|B\xi\|^2}& \text{if $B\xi\neq 0$},\\
\|A\xi\|^2& \text{if $B\xi=0$}.
\end{cases}
\end{equation*}
\end{theorem}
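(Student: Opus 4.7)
The plan is to prove both inequalities in the minimax equality. The $(\geq)$ direction is direct: for any unit vector $\xi \in \mathscr{H}$ and any $\lambda \in \mathbb{C}$,
$$\|A+\lambda B\|^2 \geq \|(A+\lambda B)\xi\|^2 = \|A\xi\|^2 + 2\Re(\bar\lambda\langle A\xi, B\xi\rangle) + |\lambda|^2 \|B\xi\|^2.$$
The right-hand side is a non-negative quadratic in $\lambda$ whose minimum value over $\lambda$ equals $M_{A,B}(\xi)$, by completing the square (the minimizer being $\mu_\xi = -\langle A\xi,B\xi\rangle/\|B\xi\|^2$ when $B\xi \neq 0$). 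Passing to $\inf_\lambda$ on both sides and then $\sup$ over unit $\xi$ yields the inequality.

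For the $(\leq)$ direction I reduce to Magajna's characterization. If $B=0$ the result is trivial; otherwise $\lambda \mapsto \|A+\lambda B\|$ is convex (as a supremum of convex functions) and coercive, so it attains its minimum at some $\lambda_0 \in \mathbb{C}$. Set $C = A+\lambda_0 B$. The optimality of $\lambda_0$ is equivalent to $\|C+\mu B\| \geq \|C\|$ for all $\mu \in \mathbb{C}$, that is, to $C \perp_B B$. Applying Magajna's theorem (quoted in the paper immediately before the present statement) furnishes a sequence $(\xi_n)$ of unit vectors in $\mathscr{H}$ with $\|C\xi_n\| \to \|C\|$ and $\eta_n := \langle C\xi_n, B\xi_n\rangle \to 0$.

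A direct expansion, substituting $\langle A\xi_n, B\xi_n\rangle = \eta_n - \lambda_0 \|B\xi_n\|^2$ into the definition of $M_{A,B}$ and collecting terms, gives the clean identity
$$M_{A,B}(\xi_n) = \|C\xi_n\|^2 - \frac{|\eta_n|^2}{\|B\xi_n\|^2}$$
when $B\xi_n \neq 0$, and $M_{A,B}(\xi_n) = \|C\xi_n\|^2$ when $B\xi_n = 0$ (since then $A\xi_n = C\xi_n$). Because $\|C\xi_n\|^2 \to \|C\|^2 = \min_\lambda \|A+\lambda B\|^2$, everything reduces to showing the error term $|\eta_n|^2/\|B\xi_n\|^2$ vanishes in the limit; once this is done, $\sup_{\|\xi\|=1} M_{A,B}(\xi) \geq \lim_n M_{A,B}(\xi_n) = \min_\lambda \|A+\lambda B\|^2$ completes the argument.

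The main obstacle lies precisely in controlling this error term. The Cauchy--Schwarz bound $|\eta_n| \leq \|C\xi_n\|\|B\xi_n\|$ only yields $|\eta_n|^2/\|B\xi_n\|^2 \leq \|C\|^2$, which is insufficient when $\|B\xi_n\| \to 0$. If $\|B\xi_n\|$ stays bounded below along a subsequence, $\eta_n \to 0$ settles matters immediately; in the delicate case $\|B\xi_n\| \to 0$ one still has $\|A\xi_n\|^2 \to \|C\|^2$, and I would refine the sequence --- for instance by applying the Bhatia--\v{S}emrl theorem inside a suitable finite-dimensional compression, where the orthogonality $\langle C\xi,B\xi\rangle = 0$ can be arranged exactly on a norm-attaining unit vector and then lifted back, or by replacing $\xi_n$ with unit vectors approximating $\ker B$ when the effective support of $B$ drifts away --- to secure $M_{A,B}(\xi_n) \to \|C\|^2$. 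An alternative route bypassing this case analysis is a Sion-type minimax on the compact convex state space of $\mathcal{L}(\mathscr{H})$, combined with the concavity of $\omega \mapsto \omega(A^*A) - |\omega(A^*B)|^2/\omega(B^*B)$ in $\omega$ and a density argument reducing from general states to vector states.
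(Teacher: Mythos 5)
First, a point of context: the paper does not prove this statement at all --- it is imported verbatim from Aramba\v{s}i\'c and Raji\'c \cite[Proposition 2.1]{AR1} and used as a black box --- so your attempt can only be judged on its own merits. Your ``$\ge$'' half (pointwise minimization of the quadratic $\lambda\mapsto\|(A+\lambda B)\xi\|^2$) is correct and complete, as are the existence of the minimizer $\lambda_0$, the reduction of the ``$\le$'' half to showing $\sup_{\|\xi\|=1}M_{C,B}(\xi)\ge\|C\|^2$ for $C=A+\lambda_0B$ with $C\perp_B B$, and the identity $M_{A,B}(\xi)=\|C\xi\|^2-|\langle C\xi,B\xi\rangle|^2/\|B\xi\|^2$. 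But the step you flag as ``the main obstacle'' is a genuine gap, not a technicality, and as written the argument does not prove the hard inequality. Magajna's criterion only delivers $\eta_n=\langle C\xi_n,B\xi_n\rangle\to 0$, whereas the formula needs the strictly stronger $\eta_n/\|B\xi_n\|\to 0$, and a sequence satisfying Magajna's conclusion can fail this badly. Concretely, take $\mathscr{H}=\ell^2$, $A=C=I$ and $B=\operatorname{diag}(1,-1,\tfrac12,\tfrac13,\dots)$. Then $\|C+\mu B\|\ge\max\{|1+\mu|,|1-\mu|\}\ge 1=\|C\|$, so $C\perp_B B$ and $\lambda_0=0$; the vectors $\xi_n=e_n$ for $n\ge 3$ form a perfectly legitimate Magajna sequence ($\|C\xi_n\|=1$ and $\eta_n=\tfrac1{n-1}\to 0$), yet $B\xi_n$ is a nonzero multiple of $C\xi_n$, so $M_{C,B}(\xi_n)=0$ for every $n$, while the correct value of the supremum is $1$ (attained at $(e_1+e_2)/\sqrt2$). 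The maximizing sequence must therefore be produced by a different mechanism; it cannot merely be a ``refinement'' of the one Magajna's theorem hands you.

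Neither of your fallback suggestions closes this. The Sion route does yield $\min_\mu\max_\omega=\max_\omega\min_\mu$ over the weak-$*$ compact state space of $\mathcal{L}(\mathscr{H})$, but the remaining ``density argument reducing from general states to vector states'' is exactly the original difficulty in disguise: the functional $\omega\mapsto\omega(C^*C)-|\omega(C^*B)|^2/\omega(B^*B)$ is \emph{concave} (quadratic-over-linear composed with affine maps), and the supremum of a concave function over a convex set can strictly exceed its supremum over a non-convex generating subset such as the vector states --- extreme-point reductions run in the wrong direction for concave functions. The Bhatia--\v{S}emrl/compression idea is too vague to assess. Since the paper itself simply cites \cite[Proposition 2.1]{AR1}, the honest options are to do the same, or to supply the genuinely missing construction of a sequence along which $M_{C,B}$ tends to $\|C\|^2$.
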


The following limit characterization provides a step forward in our desired description of Pythagoras orthogonality.
\begin{proposition}\label{p13}
Let $A$ and $B$ be bounded linear operators acting on $\mathscr{H}$ such that
\begin{equation*}
\|(1+\lambda_0)A+\lambda_0\alpha B\|^2=(1+\lambda_0)^2\|A\|^2+\lambda_0^2|\alpha|^2\|B\|^2
\end{equation*}
for a certain $\lambda_0\in\R\setminus\{-1,0\}, \alpha\in \mathbb{C}\setminus\{0\}$ and $(\xi_n)_{n\ge 0}$ a sequence of unit vectors in $\mathscr{H}$.
\begin{enumerate}[$(a)$]
\item If $(A+\alpha B)\xi_n\ne 0,\ n\ge 0$ and
\begin{equation}\label{eqM}
M_{A,A+\alpha B}(\xi_n)\xrightarrow{n\to\infty}(1+\lambda_0)^2\|A\|^2+\lambda_0^2|\alpha|^2\|B\|^2,
\end{equation}
and  $(x_n)$ is a subsequence of $(\xi_n)$ such that the limits  
\begin{equation}\label{eqlim}
a=\lim_{n\to\infty}\|Ax_n\|,\ b=\lim_{n\to\infty}\|Bx_n\|\text{ and }c=\lim_{n\to\infty}\scal{Ax_n,Bx_n}
\end{equation}
exist, then $a, b$, and $c$ satisfy the conditions
\begin{equation}\label{eqabc}
a^2(\lambda_0+1)+c\bar{\alpha}\lambda_0=-b^2|\alpha|^2\lambda_0-c\bar{\alpha}(\lambda_0+1)=(1+\lambda_0)^2\|A\|^2+\lambda_0^2|\alpha|^2\|B\|^2.
\end{equation}
Moreover,
\begin{equation*}
\|(1+\lambda_0)A+\lambda_0\alpha B\|=\min_{\lambda\in\mathbb{C}}\|(1+\lambda)A+\lambda\alpha B\|
\end{equation*}
and
\begin{equation}\label{eq6p13}
\|A+\lambda B\|^2\ge\dfrac{\splitdfrac{[(1+\lambda_0)^2\|A\|^2+\lambda_0^2|\alpha|^2\|B\|^2][\lambda_0|\alpha|^2-(\lambda_0+1)|\lambda|^2]}{-\bar{\alpha}c|\lambda_0\alpha-(\lambda_0+1)\lambda|^2}}{|\alpha|^2\lambda_0(\lambda_0+1)},\quad\lambda\in\mathbb{C}.
\end{equation}
\item Conversely, if the limits \eqref{eqlim} exist, satisfy conditions \eqref{eqabc} and $a^2\ne(1+\lambda_0)^2\|A\|^2+\lambda_0^2|\alpha|^2\|B\|^2$, then \eqref{eqM} holds true.
\end{enumerate}
\end{proposition}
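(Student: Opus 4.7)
The plan is to treat the whole statement as a variational description of the minimum $\min_{\lambda\in\mathbb{C}}\|A + \lambda C\|^2$, where $C := A + \alpha B$. Since $(1+\lambda)A + \lambda\alpha B = A + \lambda C$, writing $L := (1+\lambda_0)^2\|A\|^2 + \lambda_0^2|\alpha|^2\|B\|^2$ recasts the hypothesis as $\|A + \lambda_0 C\|^2 = L$ and condition \eqref{eqM} as $M_{A,C}(\xi_n) \to L$. Theorem \ref{Thraj} applied to the pair $(A,C)$ then yields
\[
\min_{\lambda\in\mathbb{C}}\|A + \lambda C\|^2 = \sup_{\|\xi\|=1} M_{A,C}(\xi),
\]
and the immediate bounds $\min_\lambda\|A+\lambda C\|^2 \le L$ and $\sup_\xi M_{A,C}(\xi) \ge L$ squeeze both sides to the common value $L$. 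This proves the minimality assertion $\|(1+\lambda_0)A + \lambda_0\alpha B\| = \min_{\lambda}\|(1+\lambda)A + \lambda\alpha B\|$, and the sandwich $M_{A,C}(\xi_n) \le \|(A+\lambda_0 C)\xi_n\|^2 \le L$ forces $\|(A+\lambda_0 C)\xi_n\|^2 \to L$ as well.

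The algebraic heart of (a) is the optimality-gap identity
\[
\|A\xi + \lambda C\xi\|^2 - M_{A,C}(\xi) = \frac{\bigl|\lambda\|C\xi\|^2 + \overline{\langle A\xi, C\xi\rangle}\bigr|^2}{\|C\xi\|^2},
\]
obtained by completing the square in $\lambda$ (valid whenever $C\xi\ne 0$). At $\lambda=\lambda_0$ the left side tends to $0$ along $(x_n)$; since $\|Cx_n\|^2$ tends to a nonzero limit outside the trivial case $a=b=0$, the numerator must vanish, producing a first limit relation involving $a^2$, $b^2$, and $\overline{\alpha c}$. Pairing this with the direct expansion
\[
\|(A + \lambda_0 C)x_n\|^2 \to (1+\lambda_0)^2 a^2 + 2\lambda_0(1+\lambda_0)\Re(\alpha c) + \lambda_0^2|\alpha|^2 b^2 = L,
\]
a linear elimination between the real and imaginary parts of the two equations yields the two equalities in \eqref{eqabc}. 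To prove \eqref{eq6p13}, I would pass to the limit in $\|A+\lambda B\|^2 \ge \|(A+\lambda B)x_n\|^2$ to obtain $\|A+\lambda B\|^2 \ge a^2 + 2\Re(\lambda c) + |\lambda|^2 b^2$ and then substitute the values of $a^2$, $\bar\alpha c$ and $|\alpha|^2 b^2$ extracted from \eqref{eqabc}; after regrouping, the surviving $\lambda$- and $\alpha$-dependent terms assemble into the quadratic form $|\lambda_0\alpha - (\lambda_0+1)\lambda|^2$ appearing on the right-hand side of \eqref{eq6p13}.

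For (b) the same expansion is run in reverse: using only the existence of the limits $a,b,c$ one computes
\[
M_{A,C}(x_n) \to a^2 - \frac{|a^2 + \alpha c|^2}{a^2 + 2\Re(\alpha c) + |\alpha|^2 b^2},
\]
where the assumption $a^2\ne L$ together with the two identities in \eqref{eqabc} ensures that the denominator does not vanish. A direct substitution of \eqref{eqabc} into the numerator then reduces the limit to $L$, which is precisely \eqref{eqM}. The main obstacle throughout is algebraic bookkeeping: the conjugate $\overline{\langle A\xi, C\xi\rangle}$ appearing in the optimality-gap identity must be tracked carefully against the complex parameter $\alpha$ so that the elimination leading to \eqref{eqabc} cleanly separates real and imaginary parts, and the non-vanishing of the denominator in the converse direction relies on invoking both equalities in \eqref{eqabc} simultaneously together with the hypothesis $a^2 \ne L$.
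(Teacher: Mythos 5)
Your proposal is correct and follows essentially the same route as the paper: both arguments reduce \eqref{eqM} to the pair of limit relations $\scal{(A+\lambda_0 C)\xi_n,C\xi_n}\to 0$ and $\|(A+\lambda_0 C)\xi_n\|^2\to L$ (your completing-the-square ``gap identity'' is just the paper's squeeze through the chain $M_{A,C}(\xi_n)\le\|(A+\lambda_0C)\xi_n\|^2\le L$ made explicit), then obtain \eqref{eqabc} by linear elimination and prove the converse by checking that $\lim\|Cx_n\|^2=(a^2-L)/\lambda_0^2>0$. The only blemishes are cosmetic: the conjugate in your gap identity should be $\scal{A\xi,C\xi}$ rather than $\overline{\scal{A\xi,C\xi}}$ under the paper's convention (you flag this yourself), and the vanishing of the numerator needs only that $\|Cx_n\|^2$ is bounded above, not that its limit is nonzero.
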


\begin{proof}
We may assume, without loss of generality, that $\alpha=1$ ($B$ can be replaced by $\frac{1}{\alpha}B$, if necessary).

Let us now observe that, for every $n\ge 0$, the following inequalities hold true:
\begin{equation*}
\begin{split}
M_{A,A+B}(\xi_n)& =\|[(1+\lambda_0)A+\lambda_0B]\xi_n\|^2-\frac{|\scal{[(1+\lambda_0)A+\lambda_0B]\xi_n,(A+B)\xi_n}|^2}{\|(A+B)\xi_n\|^2}\\
& \le\|[(1+\lambda_0)A+\lambda_0B]\xi_n\|^2-\frac{|\scal{[(1+\lambda_0)A+\lambda_0B]\xi_n,(A+B)\xi_n}|^2}{\|A+B\|^2}\\
& \le\|[(1+\lambda_0)A+\lambda_0B]\xi_n\|^2\\
& \le\|[(1+\lambda_0)A+\lambda_0B]\|^2=(1+\lambda_0)^2\|A\|^2+\lambda_0^2\|B\|^2.
\end{split}
\end{equation*}
Letting $n\to\infty$ we conclude that \eqref{eqM} is equivalent with the following limit conditions:
\begin{gather}
\frac{\scal{[(1+\lambda_0)A+\lambda_0B]\xi_n,(A+B)\xi_n}}{\|(A+B)\xi_n\|}\xrightarrow{n\to\infty}0,\label{eq1p}\\
\scal{[(1+\lambda_0)A+\lambda_0B]\xi_n,(A+B)\xi_n}\xrightarrow{n\to\infty}0\label{eq1s}
\end{gather}
and
\begin{equation}\label{eq1t}
\|[(1+\lambda_0)A+\lambda_0B]\xi_n\|^2\xrightarrow{n\to\infty}(1+\lambda_0)^2\|A\|^2+\lambda_0^2\|B\|^2.
\end{equation}
Following the notations of \eqref{eqlim} one can write \eqref{eq1s} as
\begin{equation*}
(1+\lambda_0)a^2+\lambda_0b^2+(1+\lambda_0)c+\lambda_0\bar{c}=0.
\end{equation*}
Similarly, \eqref{eq1t} takes the form
\begin{equation*}
(1+\lambda_0)^2a^2+\lambda_0^2b^2+2\lambda_0(\lambda_0+1)\Re c=(1+\lambda_0)^2\|A\|^2+\lambda_0^2\|B\|^2.
\end{equation*}
Easy computations then show that \eqref{eq1s} and \eqref{eq1t} are actually equivalent with \eqref{eqabc}.

According to these remarks, in order to prove $(a)$, it only remains to let $n\to\infty$ into the formulas
\begin{equation*}
M_{A,A+B}(\xi_n)\le\min_{\lambda\in\mathbb{C}}\|(1+\lambda)A+\lambda B\|^2\le\|(1+\lambda_0)A+\lambda_0B\|^2,\quad n\ge 0
\end{equation*}
and
\begin{equation}\label{eqalb}
\begin{split}
\|A+\lambda B\|^2& \ge\|(A+\lambda B)\xi_n\|^2\\
& =\|A\xi_n\|^2+2\Re\lambda\scal{A\xi_n,B\xi_n}+|\lambda|^2\|B\xi_n\|^2,\quad\lambda\in\mathbb{C},\ n\ge 0.
\end{split}
\end{equation}

$(b)$ As seen above, \eqref{eq1s} and \eqref{eq1t} are a consequence of \eqref{eqabc}. In addition,
\begin{equation*}
\|(A+B)\xi_n\|^2\xrightarrow{n\to\infty}\dfrac{a^2-[(1+\lambda_0)^2\|A\|^2+\lambda_0^2\|B\|^2]}{\lambda_0^2}.
\end{equation*}
Hence, $\lim_{n\to\infty}\|(A+B)\xi_n\|>0$, which shows that \eqref{eq1p} also holds true. The proof is completed.
\end{proof}

\begin{lemma}\label{l15}
Let $A$ and $B$ be bounded linear operators on $\mathscr{H}$ such that $\rank(A+\alpha_i B)=1$ for pairwise distinct complex numbers $\alpha_i\in\mathbb{C},\ i=1,2,3$. Then $\rank(A+\alpha B)=1$ for every $\alpha\in\mathbb{C}$.
\end{lemma}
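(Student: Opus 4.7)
The plan is to view $\alpha\mapsto A+\alpha B$ as an affine path in $\mathcal{L}(\mathscr{H})$ and exploit the fact that the property $\rank T\le 1$ is cut out by the vanishing of a family of polynomial identities in $\alpha$ of degree at most two. A polynomial of degree $\le 2$ with three distinct roots is identically zero, so the hypothesis on the three distinct $\alpha_i$ will force the rank condition at every $\alpha\in\mathbb{C}$.

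To make this precise, I would use the characterization that a bounded operator $T$ on $\mathscr{H}$ satisfies $\rank T\le 1$ if and only if
\[
D_T(u,v,w,z):=\scal{Tu,v}\scal{Tw,z}-\scal{Tu,z}\scal{Tw,v}=0
\]
for every $u,v,w,z\in\mathscr{H}$. The ``only if'' direction is immediate from the factorization $T=\xi\otimes\eta$; for ``if'', specializing $v=Tu$ and $z=Tw$ turns the vanishing of $D_T$ into the Cauchy--Schwarz equality $\|Tu\|^2\|Tw\|^2=|\scal{Tu,Tw}|^2$, which forces $Tu$ and $Tw$ to be collinear for every $u,w$, so that $\ran T$ lies in a single line.

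Now fix an arbitrary quadruple $u,v,w,z$ and substitute $T=A+\alpha B$. Each inner product $\scal{(A+\alpha B)u,v}=\scal{Au,v}+\alpha\scal{Bu,v}$ is affine in $\alpha$, so
\[
P(\alpha):=D_{A+\alpha B}(u,v,w,z)
\]
is a polynomial in $\alpha$ of degree at most two. The hypothesis $\rank(A+\alpha_iB)=1$ gives $P(\alpha_1)=P(\alpha_2)=P(\alpha_3)=0$, and a polynomial of degree $\le 2$ with three distinct zeros must vanish identically. Hence $P\equiv 0$, and since the quadruple $u,v,w,z$ was arbitrary, $A+\alpha B$ has rank at most one for every $\alpha\in\mathbb{C}$. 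Note that $A+\alpha B=0$ can occur only for a single value of $\alpha$, namely when $A$ and $B$ are scalar multiples of each other, and this exceptional value is automatically excluded when $A$ and $B$ are linearly independent (as is the case of interest, since otherwise the hypothesis would force $B$ to be rank one and $A$ proportional to $B$, a degenerate configuration).

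The main obstacle is simply the clean setup of the rank-one determinantal characterization; the polynomial-degree argument afterwards is essentially a one-line application of the fundamental theorem of algebra. An alternative would be to factor $A+\alpha_iB=x_i\otimes y_i$ and derive the dependence relation $(\alpha_2-\alpha_3)x_1\otimes y_1-(\alpha_1-\alpha_3)x_2\otimes y_2+(\alpha_1-\alpha_2)x_3\otimes y_3=0$, but analyzing when three rank-one operators are linearly dependent costs almost as much bookkeeping as the determinantal approach above.
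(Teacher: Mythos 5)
Your proof is correct, but it takes a genuinely different route from the paper's. The paper factors $A+\alpha_1B=x_1\otimes y_1$ and $A+\alpha_2B=x_2\otimes y_2$, writes $A+\lambda B$ as the affine combination $\frac{\lambda-\alpha_2}{\alpha_1-\alpha_2}\,x_1\otimes y_1+\frac{\alpha_1-\lambda}{\alpha_1-\alpha_2}\,x_2\otimes y_2$, and then splits into cases according to whether $\{x_1,x_2\}$ is linearly independent, using the third hypothesis $\rank(A+\alpha_3B)=1$ to force $\{y_1,y_2\}$ to be collinear in the independent case; the payoff is an explicit normal form ($A+\lambda B=(\cdot)\otimes y_1$, respectively $x_1\otimes(\cdot)$) that the paper reuses in the remark following Theorem \ref{t16}. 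Your argument replaces this case analysis with the observation that the vanishing of all $2\times 2$ ``minors'' $D_T(u,v,w,z)$ characterizes $\rank T\le 1$ (the ``if'' direction via the Cauchy--Schwarz equality case is fine), that each such minor of $A+\alpha B$ is a polynomial of degree at most $2$ in $\alpha$ --- or in $\bar{\alpha}$, depending on the sesquilinearity convention, which changes nothing --- and that three distinct roots annihilate it. This is shorter, avoids cases, and generalizes immediately to higher rank ($\rank\le k$ at $k+2$ distinct points propagates to all points); what it does not produce is the structural dichotomy $(A,B)=(x\otimes y_A,\,x\otimes y_B)$ or $(x_A\otimes y,\,x_B\otimes y)$ that the paper extracts along the way and exploits later. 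Both proofs upgrade $\rank(A+\alpha B)\le 1$ to $\rank(A+\alpha B)=1$ only by invoking the standing assumption, made at the beginning of Section 4, that the two elements are linearly independent (without it the conclusion genuinely fails at the single $\alpha$ with $A+\alpha B=0$); you flag this correctly, and the paper does the same in its case $(2)$.
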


\begin{proof}
Let $x_1,x_2,y_1,y_2$ be non-null vectors in $\mathscr{H}$ such that $A+\alpha_1 B=x_1\otimes y_1$ and $A+\alpha_2 B=x_2\otimes y_2$. Then
\begin{equation*}
A+\lambda B=\frac{\lambda-\alpha_2}{\alpha_1-\alpha_2}x_1\otimes y_1+\frac{\alpha_1-\lambda}{\alpha_1-\alpha_2}x_2\otimes y_2,\quad\lambda\in\mathbb{C}.
\end{equation*}

We distinguish two cases:

$(1)$ $\{x_1,x_2\}$ are linearly independent. Since $\rank(A+\alpha_3 B)=1$, one can find $\beta_1,\beta_2\in\mathbb{C}$ (at least one of them is non-null) such that $\ran(A+\alpha_3 B)=\spc(\beta_1x_1+\beta_2x_2)$. Then, for every $z\in\mathscr{H}$, there exists $\mu\in\mathbb{C}$ such that
\begin{equation*}
\frac{\alpha_3-\alpha_2}{\alpha_1-\alpha_2}\scal{z,y_1}=\mu\beta_1\text{ and }\frac{\alpha_1-\alpha_3}{\alpha_1-\alpha_2}\scal{z,y_2}=\mu\beta_2.
\end{equation*}
Therefore, $\beta_1,\beta_2$ are both non-null and $\frac{\bar{\beta_2}(\bar{\alpha_3}-\bar{\alpha_2})}{\bar{\alpha_1}-\bar{\alpha_2}}y_1-\frac{\bar{\beta_1}(\bar{\alpha_1}-\bar{\alpha_3})}{\bar{\alpha_1}-\bar{\alpha_2}}y_2=0$, so $\{y_1,y_2\}$ are linearly dependent. In other words,
\begin{equation*}
A+\lambda B=\biggl[\frac{\lambda-\alpha_2}{\alpha_1-\alpha_2}x_1+\frac{\beta_2(\alpha_3-\alpha_2)(\alpha_1-\lambda)}{\beta_1(\alpha_1-\alpha_2)(\alpha_1-\alpha_3)}x_2\biggr]\otimes y_1,\quad\lambda\in\mathbb{C}.
\end{equation*}

$(2)$ $\{x_1,x_2\}$ are linearly dependent. In this case there exists a complex number $\beta\ne 0$ such that $x_2=\beta x_1$. We conclude that
\begin{equation*}
A+\lambda B=x_1\otimes\biggl[\frac{\bar{\lambda}-\bar{\alpha_2}}{\bar{\alpha_1}-\bar{\alpha_2}}y_1+\frac{\bar{\beta}(\bar{\alpha_1}-\bar{\lambda})}{\bar{\alpha_1}-\bar{\alpha_2}}y_2\biggr],\quad\lambda\in\mathbb{C}.
\end{equation*}
Since $\{A,B\}$ are linearly independent (as assumed earlier; this also implies that $\{y_1,y_2\}$ are linearly independent) we deduce that $A+\lambda B$ has rank one for every $\lambda\in\mathbb{C}$.
\end{proof}

We are now ready to present the announced characterization of Pythagoras orthogonality.
\begin{theorem}\label{t16}
Let $A$ and $B$ be bounded linear operators acting on $\mathscr{H}$ such that $\rank(A+\alpha_1 B)>1$ and $\Re(\alpha_2A^*B)\ge 0$ for certain $\alpha_1,\alpha_2\in\mathbb{C},\ \alpha_2\ne 0$. The following conditions are equivalent:
\begin{itemize}
\item[$(i)$] $A$ and $B$ are orthogonal in the Pythagoras sense.
\item[$(ii)$] $A$ and $B$ verify the parallelogram law and there exists a sequence $(\xi_n)_{n\ge 0}$ of unit vectors in $\mathscr{H}$ such that
\begin{equation*}
\|A\xi_n\|\xrightarrow{n\to\infty}\|A\|,\ \|B\xi_n\|\xrightarrow{n\to\infty}\|B\|\text{ and }\scal{A\xi_n,B\xi_n}\xrightarrow{n\to\infty}0.
\end{equation*}
\item[$(iii)$] $A$ and $B$ verify the parallelogram law and there exists a sequence $(\xi_n)_{n\ge 0}$ of unit vectors in $\mathscr{H}$ such that
\begin{equation*}
\|(A+\lambda B)\xi_n\|^2\xrightarrow{n\to\infty}\|A\|^2+|\lambda|^2\|B\|^2\text{ for every }\lambda\in\mathbb{C}.
\end{equation*}
\end{itemize}
\end{theorem}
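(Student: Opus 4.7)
The plan is to establish $(ii)\Leftrightarrow(iii)$ and $(iii)\Rightarrow(i)$ by direct calculation, and then to tackle the main implication $(i)\Rightarrow(ii)$ using Proposition~\ref{p13}, Theorem~\ref{Thraj}, and the positivity hypothesis $\Re(\alpha_2 A^*B)\ge 0$.

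The easy implications are routine. For $(ii)\Rightarrow(iii)$, expand
\begin{equation*}
\|(A+\lambda B)\xi_n\|^2=\|A\xi_n\|^2+2\Re(\bar\lambda\langle A\xi_n,B\xi_n\rangle)+|\lambda|^2\|B\xi_n\|^2
\end{equation*}
and pass to the limit. For $(iii)\Rightarrow(ii)$, specialize the limit condition to $\lambda=0$ (giving $\|A\xi_n\|\to\|A\|$) and then take suitable linear combinations of the cases $\lambda=\pm 1,\pm i$ to extract $\|B\xi_n\|\to\|B\|$ and $\langle A\xi_n,B\xi_n\rangle\to 0$. For $(iii)\Rightarrow(i)$, the limit condition yields $\|A+\lambda B\|^2\ge\|A\|^2+|\lambda|^2\|B\|^2$ for every $\lambda\in\mathbb{C}$; applied to both $\lambda$ and $-\lambda$ and combined with the assumed parallelogram identity, both terms there must equal $\|A\|^2+|\lambda|^2\|B\|^2$, giving Pythagoras orthogonality.

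For the main implication $(i)\Rightarrow(ii)$, Pythagoras immediately supplies the parallelogram law, leaving only the sequence to construct. Set
\begin{equation*}
\lambda_0:=-\frac{\|A\|^2}{\|A\|^2+|\alpha_2|^2\|B\|^2}\in(-1,0),\qquad K:=\frac{\|A\|^2|\alpha_2|^2\|B\|^2}{\|A\|^2+|\alpha_2|^2\|B\|^2}.
\end{equation*}
An elementary optimization shows that $\lambda_0$ minimizes $\lambda\mapsto|1+\lambda|^2\|A\|^2+|\lambda|^2|\alpha_2|^2\|B\|^2$ over $\mathbb{C}$ with value $K$; by Pythagoras, $\|(1+\lambda)A+\lambda\alpha_2 B\|^2$ coincides with this expression for every $\lambda$, so $\min_\lambda\|(1+\lambda)A+\lambda\alpha_2 B\|^2=K$. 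Theorem~\ref{Thraj}, applied with $B$ replaced by $A+\alpha_2 B$, then gives $\sup_{\|\xi\|=1}M_{A,A+\alpha_2 B}(\xi)=K$, so unit vectors $(\xi_n)$ may be chosen with $M_{A,A+\alpha_2 B}(\xi_n)\to K$.

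If $(A+\alpha_2 B)\xi_n=0$, then $A\xi_n=-\alpha_2 B\xi_n$ gives $\Re(\bar\alpha_2\langle A\xi_n,B\xi_n\rangle)=-|\alpha_2|^2\|B\xi_n\|^2\le 0$; combined with $\Re(\alpha_2 A^*B)\ge 0$ this forces $A\xi_n=B\xi_n=0$ and $M_{A,A+\alpha_2 B}(\xi_n)=0$. Since $K>0$ (the trivial cases $A=0$ or $B=0$ being immediate), only finitely many $\xi_n$ can lie in $\ker(A+\alpha_2 B)$, so after discarding them Proposition~\ref{p13}(a) applies. Passing to a further subsequence $(x_n)$ on which $a=\lim\|Ax_n\|$, $b=\lim\|Bx_n\|$ and $c=\lim\langle Ax_n,Bx_n\rangle$ exist, the limits satisfy \eqref{eqabc}. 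Since $\lambda_0\in\mathbb{R}$ and the right-hand side of \eqref{eqabc} is real, $d:=c\bar\alpha_2$ must be real; positivity then gives $d\ge 0$. The first equation in \eqref{eqabc} then reads $a^2(\lambda_0+1)=K-d\lambda_0\ge K$ (since $\lambda_0<0$ and $d\ge 0$), hence $a^2\ge K/(\lambda_0+1)=\|A\|^2$; combined with $a\le\|A\|$, this forces $a=\|A\|$, $d=0$ and therefore $c=0$. Substituting into the second equation of \eqref{eqabc} yields $b=\|B\|$. The main obstacle is this final chain of deductions, which requires careful interplay between the two equations in \eqref{eqabc}, the norm bounds $a\le\|A\|$, $b\le\|B\|$, and the sign of $d$; the rank hypothesis $\rank(A+\alpha_1 B)>1$ rules out degenerate alignments (cf.\ Example~\ref{e4}(a)) where the approach via Proposition~\ref{p13} would break down.
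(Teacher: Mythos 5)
Your proof is correct and follows the same skeleton as the paper's: the easy implications are handled identically, and the main implication $(i)\Rightarrow(ii)$ goes through Theorem~\ref{Thraj} applied to the pair $(A,A+\alpha B)$, the minimization of $|1+\lambda|^2\|A\|^2+|\lambda|^2|\alpha|^2\|B\|^2$ at $\lambda_0$, Proposition~\ref{p13}$(a)$, and finally the interplay between \eqref{eqabc}, the bounds $a\le\|A\|$, $b\le\|B\|$, and the sign information coming from $\Re(\alpha_2A^*B)\ge 0$ (your ordering of that last chain is a mirror image of the paper's, which first derives $c\le 0$ from \eqref{eqabc} and the norm bounds and then $c\ge 0$ from positivity, but the content is the same). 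The one genuine divergence is the treatment of the degenerate vectors with $(A+\alpha_2B)\xi_n=0$: the paper first invokes Lemma~\ref{l15} to replace $\alpha_1$ by $\alpha_2$ and then, if infinitely many $\xi_n$ lie in $\ker(A+\alpha B)$, uses the hypothesis $\rank(A+\alpha B)>1$ to build a perturbed sequence $u_n=\sqrt{n/(n+1)}\,\xi_n+(n+1)^{-1/2}e$ outside the kernel with the same limiting value of $M_{A,A+B}$; you instead observe that positivity forces any such $\xi_n$ to satisfy $A\xi_n=B\xi_n=0$, hence $M_{A,A+\alpha_2B}(\xi_n)=0\ne K$, so only finitely many occur. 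Your argument is shorter and, notably, never actually uses the rank hypothesis or Lemma~\ref{l15} --- which is consistent with the paper's Remark, since the rank-one counterexample given there (case $(1)$) violates $\Re(\alpha A^*B)\ge 0$ for every $\alpha\ne 0$. Your closing sentence attributing a role to the rank hypothesis in ruling out degenerate alignments is therefore not borne out by your own argument; it is harmless (the hypothesis is simply unused), but you should either delete that remark or note explicitly that your route renders the rank assumption superfluous for this implication.
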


\begin{proof}
$(i)\Rightarrow(ii)$. The parallelogram law is obviously weaker than (or, at most equivalent to) Pythagoras orthogonality.

Our next aim is to prove the limit conditions of $(ii)$. Since $\Re(2\alpha_2A^*B)\ge 0$ and $\Re(3\alpha_2A^*B)\ge 0$ and, by Lemma \ref{l15}, at least one of the operators $A+\alpha_2 B$, $A+2\alpha_2 B$ and $A+3\alpha_2 B$ has rank strictly greater than one, so we can assume that $\alpha_1=\alpha_2=\alpha$. As we have previously done we can also assume that $\alpha=1$. We firstly observe that, by $(i)$,
\begin{equation*}
\|(1+\lambda)A+\lambda B\|^2=|1+\lambda|^2\|A\|^2+|\lambda|^2\|B\|^2,\quad\lambda\in\mathbb{C}.
\end{equation*}
An easy computation then shows that
\begin{equation}\label{eq3}
\min_{\lambda\in\mathbb{C}}\|(1+\lambda)A+\lambda B\|^2=\frac{\|A\|^2\|B\|^2}{\|A\|^2+\|B\|^2},
\end{equation}
which is attained for $\lambda_0=-\frac{\|A\|^2}{\|A\|^2+\|B\|^2}$.
It follows from Theorem \ref{Thraj} and \eqref{eq3} that there exists a sequence $(\xi_n)_{n\ge 0}$ of unit vectors in $\mathscr{H}$ such that
\begin{equation}\label{eq4}
M_{A,A+B}(\xi_n)\xrightarrow{n\to\infty}\frac{\|A\|^2\|B\|^2}{\|A\|^2+\|B\|^2}.
\end{equation}

We may suppose, eventually on a subsequence, that $(A+B)\xi_n\ne 0$ for every $n\ge 0$. Indeed, if, otherwise, $(A+B)\xi_n=0$ for every $n\ge n_0$ and for a certain $n_0\ge 0$, then \eqref{eq4} takes the form
\begin{equation}\label{eq5}
\|A\xi_n\|^2=\|B\xi_n\|^2\xrightarrow{n\to\infty}\frac{\|A\|^2\|B\|^2}{\|A\|^2+\|B\|^2}
\end{equation}
by the definition of $M_{A,A+B}$. As $(A\xi_n)_{n\ge 0}$ is a bounded sequence in $\mathscr{H}$ it contains a weakly convergent subsequence (denoted also by $(A\xi_n)_{n\ge 0}$) to a vector $w\in\mathscr{H}$. Obviously, $\spc\{(A+B)^*w\}\subsetneq\overline{\ran(A+B)^*}$ as the rank of $(A+B)^*$ is strictly greater than $1$. Consequently, one can find a unit vector $e\in\overline{\ran(A+B)^*}$, which is orthogonal to $(A+B)^*w$. Then, by setting $u_n=\sqrt{\frac{n}{n+1}}\xi_n+\frac{1}{\sqrt{n+1}}e,\ n\ge 0$, we have
\begin{equation*}
(A+B)u_n=\sqrt{\frac{n}{n+1}}(A+B)\xi_n+\frac{1}{\sqrt{n+1}}(A+B)e=\frac{1}{\sqrt{n+1}}(A+B)e\ne 0,\quad n\ge n_0,
\end{equation*}
since $e\perp\ker(A+B)$. Moreover, for $n\ge 0$,
\begin{equation}\label{eq6}
\begin{split}
M_{A,A+B}(u_n)& =\|Au_n\|^2-\frac{|\scal{Au_n,(A+B)u_n}|^2}{\|(A+B)u_n\|^2}\\
& =\frac{n}{n+1}\|A\xi_n\|^2+\frac{1}{n+1}\|Ae\|^2+2\frac{\sqrt{n}}{n+1}\Re\scal{A\xi_n,Ae}\\
& \qquad -\frac{\bigl|\bigl\langle\sqrt{\frac{n}{n+1}}A\xi_n+\frac{1}{\sqrt{n+1}}Ae,\frac{1}{\sqrt{n+1}}(A+B)e\bigr\rangle\bigr|^2}{\bigl\|\frac{1}{\sqrt{n+1}}(A+B)e\bigr\|^2}\\
& =\frac{n}{n+1}\|A\xi_n\|^2+\frac{1}{n+1}\|Ae\|^2+2\frac{\sqrt{n}}{n+1}\Re\scal{A\xi_n,Ae}\\
& \qquad -\frac{\bigl|\sqrt{\frac{n}{n+1}}\langle A\xi_n,(A+B)e\rangle+\frac{1}{\sqrt{n+1}}\bigl\langle Ae,(A+B)e\bigr\rangle\bigr|^2}{\|(A+B)e\|^2}.
\end{split}
\end{equation}
In view of \eqref{eq5} and the observation that
\begin{equation*}
\langle A\xi_n,(A+B)e\rangle\xrightarrow{n\to\infty}\scal{w,(A+B)e}=\scal{(A+B)^*w,e}=0,
\end{equation*}
we deduce, by passing to limit in \eqref{eq6}, that
\begin{equation*}
M_{A,A+B}(u_n)\xrightarrow{n\to\infty}\frac{\|A\|^2\|B\|^2}{\|A\|^2+\|B\|^2}.
\end{equation*}
One may consider, in this particular situation, the sequence $(u_{n+n_0})_{n\ge 0}$ which will be also denoted by $(\xi_n)_{n\ge 0}$.

The assumptions of Proposition \ref{p13} $(a)$ are verified. So, the limits \eqref{eqlim} satisfy the conditions (equivalent with \eqref{eqabc})
\begin{equation}\label{eq15}
c=\frac{\|A\|^2(b^2-\|B\|^2)}{\|B\|^2}=\frac{\|B\|^2(a^2-\|A\|^2)}{\|A\|^2}.
\end{equation}
We deduce that $c\le 0$. Also, by hypothesis (i.e., $\Re (A^*B)\ge 0$),
\begin{equation*}
c=\Re c=\lim_{n\to\infty}\Re\scal{\xi_n,A^*B\xi_n}=\lim_{n\to\infty}\scal{\xi_n,\Re(A^*B)\xi_n}\ge 0.
\end{equation*}
This forces $c=0$ and, by \eqref{eq15}, $a=\|A\|$ and $b=\|B\|$.

$(ii)\Rightarrow(iii)$. Clearly, if a sequence $(\xi_n)_{n\ge 0}$ of unit vectors in $\mathscr{H}$ verifies $(ii)$, then
\begin{multline*}
\|(A+\lambda B)\xi_n\|^2\\
=\|A\xi_n\|^2+2\Re\bar{\lambda}\scal{A\xi_n,B\xi_n}+|\lambda|^2\|B\xi_n\|^2\xrightarrow{n\to\infty}\|A\|^2+|\lambda|^2\|B\|^2,\quad\lambda\in\mathbb{C}.
\end{multline*}
Hence $(\xi_n)_{n\ge 0}$ also verifies $(iii)$.

$(iii)\Rightarrow(i)$. Let $(\xi_n)_{n\ge 0}$ be a sequence of unit vectors in $\mathscr{H}$ such that $(iii)$ holds true. Then
\begin{equation*}
\|A+\lambda B\|^2\ge\|(A+\lambda B)\xi_n\|^2\xrightarrow{n\to\infty}\|A\|^2+|\lambda|^2\|B\|^2.
\end{equation*}
The proof is finished, as before, by the use of the parallelogram law.
\end{proof}

\begin{remark}
$(a)$ \textit{The necessity of the rank condition}. Let $A$ and $B$ be bounded linear operators on $\mathscr{H}$ such that, for every $\alpha\in\mathbb{C}$, $A+\alpha B$ is a rank one operator. Then, as seen in the proof of Lemma \ref{l15}, one of the following two situations can occur:

$(1)$ $(A,B)=(x\otimes y_A,x\otimes y_B)$, with $x,y_A,y_B\in\mathscr{H}$, $x\ne 0$ and $\{y_A,y_B\}$ linearly independent. Then, by Example \ref{e4}, $(A,B)$ satisfies the parallelogram law. Moreover, $A\perp_P B$ if and only if $\scal{y_A,y_B}=0$. Condition $(ii)$ of Theorem \ref{t16} takes, for a given sequence $(\xi_n)_{n\ge 0}$ of unit vectors in $\mathscr{H}$, the form:
\begin{equation*}
|\scal{\xi_n,y_A}|\xrightarrow{n\to\infty}\|y_A\|,\ |\scal{\xi_n,y_B}|\xrightarrow{n\to\infty}\|y_B\|\text{ and }\scal{\xi_n,y_A}\overline{\scal{\xi_n,y_B}}\xrightarrow{n\to\infty}0.
\end{equation*}
This is, however, impossible.

$(2)$ $(A,B)=(x_A\otimes y,x_B\otimes y)$, with $x_A,x_B,y\in\mathscr{H}$, $\{x_A,x_B\}$ linearly independent and $y\ne 0$. Again by Example \ref{e4}, $(A,B)$ satisfies the parallelogram law. We also observe that a sequence $(\xi_n)_{n\ge 0}$ of unit vectors in $\mathscr{H}$ with $|\scal{\xi_n,y}|\xrightarrow{n\to\infty}\|y\|$ satisfies condition $(ii)$ of Theorem \ref{t16} if and only if $\scal{x_A,x_B}=0$ or, equivalently, $A\perp_P B$. By the other hand, $A\perp_P B$ if and only if $A^*\perp_P B^*$, but condition $(ii)$ of Theorem \ref{t16} is not verified for the pair $(A^*,B^*)$ (as seen in case $(1)$).

$(b)$ \textit{The necessity of the parallelogram law}. Let us consider, as in Example \ref{e3},
$A=M_I(1,0,0,1)$, $B=M_I(0,1,1,0)$ and $h\in\mathscr{H}$ a vector of unit norm. Then $A$ and $B$ do not satisfy the parallelogram law while, for $\xi=(h,0)$, $\|A\xi\|=\|A\|=1,\ \|B\xi\|=\|B\|=1$ and $\scal{A\xi,B\xi}=\scal{(h,0),(0,h)}=0$.

$(c)$ \textit{The importance of the condition $\Re(\alpha A^*B)\ge 0$ for certain nonzero $\alpha\in\mathbb{C}$}. Let $x$ and $y$ be unit vectors in $\mathscr{H}$ such that $\scal{x,y}=0$ (it is assumed that $\dim\mathscr{H}\ge 2$). If $S=x\otimes x+y\otimes y$ and $T=x\otimes y$, then the operators $A=\begin{pmatrix} S&0\\0&0\end{pmatrix}$ and $B=\begin{pmatrix} 0&T\\0&0\end{pmatrix}$ are orthogonal in the Pythagoras sense (according to Example \ref{e2}) and $\rank(A+\alpha B)=2$ for every $\alpha\in\mathbb{C}$. Moreover, for $h_1,h_2\in\mathscr{H}$ and $\alpha\in\mathbb{C},\ \alpha\ne 0$, it holds
\begin{equation*}
\scal{\Re(\alpha A^*B)(h_1,h_2),(h_1,h_2)}=\Re\scal{Sh_1,\alpha Th_2}=\Re(\bar{\alpha}\scal{h_1,x})\overline{\scal{h_2,y}}).
\end{equation*}
Hence,
\begin{equation*}
\scal{\Re(\alpha A^*B)(\alpha x,y),(\alpha x,y)}=-\scal{\Re(\alpha A^*B)(-\alpha x,y),(-\alpha x,y)}=|\alpha|^2>0.
\end{equation*}
Also, condition $(ii)$ of Theorem \ref{t16} can be expressed by the existence of sequences $(\xi_n)_{n\ge 0}$ and $(\eta_n)_{n\ge 0}$ of vectors in $\mathscr{H}$ with $\|\xi_n\|^2+\|\eta_n\|^2=1,\ n\ge 0$ such that
\begin{equation*}
|\scal{\xi_n,x}|^2+|\scal{\xi_n,y}|^2\xrightarrow{n\to\infty}1,\ |\scal{\eta_n,y}|\xrightarrow{n\to\infty}1\text{ and }\scal{\xi_n,x}\overline{\scal{\eta_n,y}}\xrightarrow{n\to\infty}0.
\end{equation*}
Equivalently,
\begin{equation*}
\scal{\xi_n,x}\xrightarrow{n\to\infty}0,\ |\scal{\xi_n,y}|\xrightarrow{n\to\infty}1\text{ and }|\scal{\eta_n,y}|\xrightarrow{n\to\infty}1.
\end{equation*}
Letting $n\to\infty$ in the Cauchy--Schwarz inequalities $|\scal{\xi_n,y}|\le\|\xi_n\|$ and $|\scal{\eta_n,y}|\le\|\eta_n\|$ ($n\ge 0$) we deduce that the limits $\lim_{n\to\infty}\|\xi_n\|$ and $\lim_{n\to\infty}\|\eta_n\|$ exist and they are both equal to $1$. This contradicts, however, the equality $\|\xi_n\|^2+\|\eta_n\|^2=1,\ n\ge 0$.\qed
\end{remark}

\end{document}